\theoremstyle{theorem}
\newtheorem{theorem}{Theorem}[section]
\newtheorem{proposition}[theorem]{Proposition}
\newtheorem{lemma}[theorem]{Lemma} 
\theoremstyle{definition} 
\newtheorem{remark}[theorem]{Remark} 
\numberwithin{equation}{section} 
\newcommand\x[1]{\mathrm{#1}}
\renewcommand\o[1]{\overline{#1}}
\newcommand\h[1]{\hat{#1}}
\renewcommand\r[1]{\mathring{#1}}
\renewcommand\t[1]{\tilde{#1}}
\newcommand\f[2]{\frac{#1}{#2}}
\newcommand\I{\int\limits}
\renewcommand\({\Big(}
\renewcommand\){\Big)}
\renewcommand\S{\sum\limits}
\renewcommand\P{\prod\limits}
\newcommand\U{\bigcup\limits}
\newcommand\F[1]{\sqrt{#1}}
\newcommand\AL{\mathcal A}
\newcommand\BL{\mathcal B}
\newcommand\CL{\mathcal C}
\newcommand\FL{\mathcal F}
\newcommand\HL{\mathcal H}
\newcommand\LL{\mathcal L}
\newcommand\PL{\mathcal P}
\newcommand\TL{\mathcal T}
\newcommand\Bl{\mathbf B}
\newcommand\Cl{\mathbf C}
\newcommand\Il{\mathbf I}
\newcommand\Kl{\mathbf K}
\newcommand\Ml{\mathbf M}
\newcommand\Nl{\mathbf N}
\newcommand\Rl{\mathbf R}
\newcommand\Sl{\mathbf S}
\newcommand\Tl{\mathbf T}
\newcommand\la{\alpha}
\newcommand\lb{\beta}
\renewcommand\lg{\gamma}
\newcommand\lG{\Gamma}
\newcommand\ld{\delta}
\newcommand\lD{\Delta}
\newcommand\Le{\varepsilon}
\newcommand\lk{\kappa}
\newcommand\lf{\phi}
\newcommand\lF{\Phi}
\renewcommand\ll{\lambda}
\newcommand\lL{\Lambda}
\newcommand\lm{\mu}
\renewcommand\ln{\nu}
\newcommand\lO{\Omega}
\newcommand\lp{\pi}
\renewcommand\lq{\psi}
\newcommand\ls{\sigma}
\newcommand\lt{\vartheta}
\newcommand\lz{\zeta}
\newcommand\m{{\boldsymbol m}}
\newcommand\n{{\mathbf n}}
\renewcommand\.{\cdot}
\renewcommand\:{\cdots}
\renewcommand\;{\ldots}
\newcommand\dl{\partial}
\newcommand\sm{\setminus}
\newcommand\oo{\infty}
\newcommand\oc{\circ}
\newcommand\op{\oplus}
\newcommand\xx{\times}
\newcommand\ic{\subset}
\newcommand\ci{\supset}
\newcommand\iu{\cup}
\newcommand\ui{\cap}
\renewcommand\l{\ell}
\newcommand\nl{\natural}
\newcommand\er{\eqref} 
\newcommand\be[1]{\begin{equation}\label{#1}}
\newcommand\ee{\end{equation}}
\newcommand\bi{\begin{itemize}}
\newcommand\ei{\end{itemize}}
\newcommand\z[3]{\tensor[_{#1}]{#2}{_{#3}}}
\begin{document}

\title[Boundary orbits]{Toeplitz $C^*$-Algebras on Boundary Orbits of Symmetric Domains}
\author[G. Misra, H. Upmeier]{Gadadhar Misra {\rm and} Harald Upmeier} 
\medskip

\address{Department of Mathematics, Indian Institute of Science, Bangalore 560012, India}
\email{gm@iisc.ac.in}

\address{Fachbereich Mathematik, Universit\"at Marburg, D-35032 Marburg, Germany}
\email{upmeier{@}mathematik.uni-marburg.de}

\dedicatory{This paper is dedicated to Nikolai Vasilevski on the occasion of his 70th birthday}

\thanks{The first-named author was supported by the J C Bose National Fellowship of the DST and CAS II of the UGC and the second-named author was supported by an Infosys Visiting Chair Professorship at the Indian Institute of Science}
\medskip

\subjclass{Primary 32M15, 46E22; Secondary 14M12, 17C36, 47B35}

\keywords{symmetric domain, boundary orbit, algebraic variety, weighted Bergman spaces, Toeplitz operator, subnormal operator, $C^*$-algebra}

\begin{abstract} We study Toeplitz operators on Hilbert spaces of holomorphic functions on symmetric domains, and more generally on certain algebraic subvarieties, determined by integration over boundary orbits of the underlying domain. The main result classifies the irreducible representations of the Toeplitz $C^*$-algebra generated by Toeplitz operators with continuous symbol. This relies on the limit behavior of ``hypergeometric" measures under certain peaking functions.
\end{abstract}

\maketitle

\setcounter{section}{-1}

\section{\bf Introduction}
Toeplitz operators and Toeplitz $C^*$-algebras on Hilbert spaces over bounded symmetric domains $\lO=G/K,$ for a semisimple Lie group 
$G$ and a maximal compact subgroup $K,$ are a deep and interesting part of multi-variable operator theory \cite{U2,U3,U4}, closely related to harmonic analysis (holomorphic discrete series of representations of $G$) and index theory. In this paper we study Hilbert spaces over non-symmetric $G$-orbits contained in the {\bf boundary} of $\lO.$ These Hilbert spaces do not belong to the holomorphic discrete series, but the associated Toeplitz operators are still $G$-homogeneous in the sense of \cite{M}. We study the $C^*$-algebra generated by these Toeplitz operators on boundary orbits and construct its irreducible representations, similar as in the symmetric case, via a refined analysis of the boundary faces of these orbits. The most interesting discovery is that for the boundary Toeplitz $C^*$-algebra, the irreducible representations do not always belong to boundary orbits, but comprise also some distinguished parameters in the discrete series (relative to the face). 

Recently, certain algebraic varieties in symmetric domains, called {\bf Jordan-Kepler varieties}, have been studied from various points of view \cite{EU,U4}. Although these varieties are not homogeneous, there exist natural $K$-invariant measures giving rise to Hilbert spaces of holomorphic functions and associated Toeplitz operators. In \cite{U5} the corresponding Toeplitz $C^*$-algebra and its representations have been investigated using asymptotic properties of hypergeometric functions. As a second main result of this paper, we combine both settings and treat Kepler-type varieties related to boundary orbits. The associated Toeplitz operators are subnormal, but the explicit description of the underlying boundary measure requires some effort. It seems that our setting is the natural level of generality, where methods of harmonic analysis based on Jordan algebraic concepts still yield a complete structure theory of Toeplitz $C^*$-algebras.

Compared to the paper \cite{U5}, to which we frequently refer, the main new result concerns the description of the measures and inner product for the underlying Hilbert space, and the expression of the reproducing kernel in terms of generalized  hypergeometric series. For boundary orbits  this is not straightforward. Also, the concept of ``hypergeometric measure" introduced in Section \ref{s} serves to clarify and streamline the exposition, especially in the proof of Theorem \ref{t}.

\section{\bf Subnormal and homogeneous operator tuples}
To put the results of this paper in perspective, recall that a commuting $n$-tuple of operators $\Sl =(S_1,\;,S_n)$ is said to be {\bf subnormal} if it is the restriction of a commuting tuple of normal operators $\Nl,$ acting on a Hilbert space $\HL,$ to an invariant subspace $\HL_0\ic\HL$. There are several intrinsic characterizations of subnormality; the one closest to the spirit of this paper is the following {\bf $C^*$-algebraic characterization}. Let $C^*[\Sl]$ be the $C^*$-algebra generated by $\{\text{Id},\,S_1, \ldots , S_n\}$
\begin{theorem}[{\cite[Theorem 2]{L}}] A commuting $n$-tuple of operators $\Sl$ is subnormal if and only if for every  subset 
$\{T_I:I\in \FL\}$ of $C^*[\Sl]$, $\FL$ finite, it follows that 
$$\S_{I,J\in \FL}T_I^*{\Sl^J}^* \Sl^I T_J\ge 0,$$
where $T_I=T_{i_1}\:T_{i_n}$ and $\Sl^I=S_1^{i_1}\:S_n^{i_n}.$  
\end{theorem}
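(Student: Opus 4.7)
Assume $\Sl$ admits a commuting normal extension $\Nl=(N_1,\;,N_n)$ on a Hilbert space $\HL\supset\HL_0$, with $\HL_0$ invariant under each $N_k$. For $v\in\HL_0$, set $f_I:=T_Iv\in\HL_0$; invariance gives $\Sl^Kw=\Nl^Kw$ for $w\in\HL_0$, hence
$$\Big\langle\S_{I,J}T_I^*(\Sl^J)^*\Sl^IT_Jv,\,v\Big\rangle=\S_{I,J}\langle\Nl^If_J,\Nl^Jf_I\rangle.$$
Inserting the joint spectral resolution $\Nl^I=\I_{\Cl^n}z^I\,dE(z)$ and collapsing both sums inside the integral, the right-hand side becomes
$$\I_{\Cl^n}\big\langle dE(z)\,g(z),\,g(z)\big\rangle,\qquad g(z):=\S_J\bar z^Jf_J\in\HL,$$
which is the squared $L^2(E;\HL)$-norm of the vector-valued function $g$ and hence nonnegative. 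Since this holds for every $v\in\HL_0$, the operator inequality follows.

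\textbf{Sufficiency.} For the converse I would build the normal extension by a GNS-type procedure. Let $\WL:=\Cl[\bar z_1,\;,\bar z_n]\xt\HL_0$, with typical element $x=\S_J\bar z^J\xt f_J$ (finite sum, $f_J\in\HL_0$), and define a sesquilinear form
$$B\(\S_J\bar z^J\xt f_J,\,\S_L\bar z^L\xt g_L\):=\S_{J,L}\langle\Sl^Lf_J,\Sl^Jg_L\rangle,$$
modelled on the identity $\langle(\Nl^*)^Jf,(\Nl^*)^Lg\rangle=\langle\Sl^Lf,\Sl^Jg\rangle$ that would hold in the sought extension by normality of $\Nl$ and invariance of $\HL_0$. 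The hypothesis immediately yields $B(x,x)\ge0$ for any $x$ of the form $\S_J\bar z^J\xt T_Jv$ with a common base $v\in\HL_0$. To upgrade positivity to arbitrary $(f_J)\ic\HL_0$, I would decompose $\HL_0=\bigoplus_\alpha\HL_{0,\alpha}$ into an orthogonal sum of $C^*[\Sl]$-cyclic $\Sl$-invariant subspaces with cyclic vectors $v_\alpha$. The $\Sl$-invariance of each summand forces the cross-components of $B$ between distinct summands to vanish, so $B$ splits as a sum over $\alpha$, and within each $\HL_{0,\alpha}$ one approximates $f_J\approx T_J^{(n)}v_\alpha$ and passes to the limit using continuity of $\Sl^L$.

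After quotienting $\WL$ by the $B$-null vectors and completing, call the resulting Hilbert space $\HL$; the map $v\mapsto 1\xt v$ embeds $\HL_0\hookrightarrow\HL$ isometrically. On the dense image of $\WL$ define
$$N_k(\bar z^J\xt f):=\bar z^J\xt S_kf,\qquad N_k^*(\bar z^J\xt f):=\bar z^{J+e_k}\xt f,$$
and check directly from the formula for $B$ that $N_k$ and $N_k^*$ are mutual adjoints and that the operators $N_k$ commute pairwise as well as with each $N_l^*$. Hence $\Nl=(N_1,\;,N_n)$ is a commuting normal tuple on $\HL$, the subspace $\HL_0$ is $\Nl$-invariant because $N_k(1\xt v)=1\xt S_kv$, and $N_k|_{\HL_0}=S_k$, proving that $\Sl$ is subnormal. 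The most delicate step is the positivity of $B$ on all of $\WL$: the hypothesis directly addresses only the cyclic case $f_J=T_Jv$, and the reduction to this case via the $C^*[\Sl]$-cyclic decomposition of $\HL_0$ is the principal technical obstacle.
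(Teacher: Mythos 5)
The paper does not prove this theorem; it is quoted verbatim from Lubin \cite[Theorem 2]{L} (the case $n=1$ being Bunce--Deddens \cite{BD}), so your proposal can only be judged against the standard proofs rather than against anything in this text. Your necessity argument is correct: with $f_I=T_Iv$ the quadratic form equals $\S_{I,J}\langle\Nl^If_J,\Nl^Jf_I\rangle$, and inserting the joint spectral measure turns this into $\I\langle dE(z)g(z),g(z)\rangle\ge0$. In the sufficiency direction, the step you single out as the ``principal technical obstacle'' --- upgrading positivity from families $f_J=T_Jv$ with a common cyclic vector to arbitrary finite families $(f_J)\ic\HL_0$ --- in fact goes through exactly as you sketch, and is precisely the Bunce--Deddens device: the $C^*[\Sl]$-cyclic subspaces are \emph{reducing} for the $*$-algebra $C^*[\Sl]$, hence mutually orthogonal and invariant under every $\Sl^L$, so the form $B$ is block-diagonal over the decomposition, and density of $C^*[\Sl]v_\la$ in each block together with continuity of the $\Sl^L$ yields the full Halmos--Bram--It\^o positivity condition.

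The genuine gap is elsewhere: \textbf{boundedness of the $N_k$}. On the quotient of $\WL$ by the $B$-null vectors, $N_k$ is only densely defined, and the formal relations $B(N_kx,y)=B(x,N_k^*y)$ and $N_kN_l^*=N_l^*N_k$ do not by themselves allow you to extend $N_k$ to a bounded normal operator on the completion $\HL$. What is needed is the estimate
$$B(N_kx,N_kx)=\S_{J,L}\langle\Sl^{L+e_k}f_J,\Sl^{J+e_k}f_L\rangle\ \le\ \|S_k\|^2\,\S_{J,L}\langle\Sl^Lf_J,\Sl^Jf_L\rangle=\|S_k\|^2\,B(x,x),$$
which is \emph{not} a formal consequence of $B\ge0$; in Bram's and It\^o's proofs it is derived by applying the positivity hypothesis to the shifted families $\{\Sl^{me_k}f_J\}_{m\ge0}$ and a comparison (geometric-series) argument. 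Without this lemma the tuple $\Nl$ need not consist of bounded operators, and the verification that $\Nl$ is a commuting normal tuple leaving the isometric copy of $\HL_0$ invariant does not close. (Note that the same estimate controls $N_k^*$, since $B(N_k^*x,N_k^*x)=B(N_kx,N_kx)$ on $\WL$ --- this is exactly the formal normality.) Once that standard lemma is supplied, your argument is complete and coincides in structure with the Bunce--Deddens--Lubin proof.
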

An immediate corollary is that if $\Sl$ is a subnormal commuting $n$-tuple and $\lp$ is a $*$-representation of the $C^*$-algebra 
$C^*[S]$, then $\lp(\Sl)$ is also subnormal. For $n=1$, these results were obtained by Bunce and Deddens \cite{BD}. 
Natural examples of subnormal operators are obtained by restricting the multiplication by the coordinate functions on the Hilbert space $L^2(\lO,m)$ to the subspace of holomorphic functions $H^2(\lO,m)$, where $\lO\ic\Cl^d$ is a bounded domain and $m$ is a finite measure supported in the closure $\o\lO$ of $\lO$. Determining when a commuting tuple of operators is subnormal, in general, is not easy. For instance, let $\lO$ be a bounded symmetric domain of genus $p$, and let $B$ be the Bergman kernel of $\lO$. Then the set of positive real $\ln$ for which $B^{\ln/p}$ remains a positive definite kernel is known (cf. \cite{FK1}) and is designated the {\bf Wallach set} of $\lO$.  For a fixed but arbitrary $\ln$ in the Wallach set, let $\HL^{(\ln)}$ denote the  Hilbert space determined by $B^{\ln/p}$. 
The biholomorphic functions of the domain $\lO$ form a group, say $G$. Thus $g\in G$ acts on $\lO$ via the map $(g,z)\mapsto g(z)$. This action lifts ($g \mapsto U_g,\, g\in G$) to the Hilbert space $\HL^\ln$: 
$$\(U^{(\ln)}_{g^{-1}}f\)(z)=Jg(z)^{\ln/p}\(f(g(z)\), \,\ g\in G,\ z\in\lO,\ f\in\HL^{(\ln)},$$    
where $Jg(z):=\det (D g(z))$. It is easy to verify, using the transformation rule for the Bergman kernel, that $U_g$ is unitary. The map $g\to U^{(\ln)}_g$ is not a homomorphism, in general, however $U^{(\ln)}_{gh}=c(g,h) U^{(\ln)}_g U^{(\ln)}_h$, where 
$c:G\xx G\to\Tl$ is a Borel multiplier. Thus $U$ defines a projective unitary representation of the group on $\HL^{(\ln)}$.  

The automorphism group $G$ admits the structure of a Lie group. Consider the bounded symmetric domain $\lO$ in its Harish-Chandra realization (cf. \cite[Section 2.1]{I}). The construction of the {\bf discrete series representations} due to Harish-Chandra is well known, see \cite[Theorem 6.6]{Kn}. The (scalar holomorphic) discrete series representations (when realized as sections of homogeneous holomorphic line bundles) occur among the projective unitary representations $U^{(\ln)}$. Harish-Chandra had determined a cut-off 
$\ln_1$ such that for all $\ln>\ln_1$, the representation $U^{(\ln)}$ is in the discrete series and the Hilbert space $\HL^{(\ln)}$ is realized as the space $H^2(\lO,dm_\ln)$, where $dm_\ln(z)=B(z,z)^{1-\ln/p}\ dv(z)$, clearly, $\x{supp}(m)=\o\lO$.  However, we also have the so-called limit discrete series representations and their analytic continuation. It is therefore natural to ask if there are other values of $\ln$ for which the inner product in the Hilbert space $\HL^{(\ln)}$ is given by an integral with respect to a measure supported on possibly some other $G$-invariant closed subset of $\o\lO$. The answer to this question involves the $G$-invariant {\bf boundary strata} of $\o\lO$ introduced below, namely, $\o\lO_{k,r}$, $1\le k\le r$, where  $r$ is the rank of the bounded symmetric domain $\lO$. 
In this notation, $\lO_{r,r}$ is the Shilov boundary and $\lO_{0,r}=\lO$. For $\ln$ in $\{\ln_1,\;,\ln_r\}$, where 
$$\ln_i=\tfrac{d}{r}+\tfrac{a}{2}(r-i),$$ 
there exists a quasi-invariant measure 
$$dm_i(gz)=|J g(z)|^{\tfrac{2\nu_i}{p}}dm_i(z),\,\ z\in\lO,\ \x{supp}(m_i)=\o\lO_{i,r},\ 1\le i\le r,$$ 
such that $L^2(\lO_{i,r},dm_i)$ contains the representation space $\HL^{(\ln)}$ as a closed subspace. (Here, with a slight abuse of notation, we let $\lO_{0,r}=\o\lO$.) The representation $U^{(\ln)}$ lifts to $\widehat{U^{(\ln)}}$ on $L^2(\lO_{i,r},dm_i)$, again, as a multiplier representation, see \cite[theorem 6.1]{AZ}. The existence of the quasi-invariant measure (in the unbounded realization of 
$G/K$) is in \cite{RV,La2}, see also \cite[Lemma 5.1]{BM1}. (The generalization to the case of vector valued holomorphic functions appears in \cite[Theorem 4.49]{I}.) However, the fact that these are the only quasi-invariant measures with support in $\o\lO$ was proved for the domains $\lO$ of type $I_{n,m}$, $m\ge n\ge 1,$ in \cite{BM1} and was extended to all bounded symmetric domains in 
\cite{AZ}. Furthermore, it can be shown that these are the only commuting tuples of ``homogeneous'' subnormal  operators in the Cowen-Douglas class of rank $1$ on $\lO$. 

Thus the commuting tuple $\Ml^{(\ln)}:=(M_1^{(\ln)},\;,M_d^{(\ln)})$ of multiplication by the coordinate functions on the Hilbert space $\HL^{(\ln)}$ is subnormal if and only if $\ln$ is in the set 
$$W_{\rm sub}:=\{\ln:\ \ln=\tfrac{d}{r}+\tfrac{a}{2}(r-j),\ 1\le j\le r\}\iu\{\ln:\ln>p-1\}.$$
For $\ln$ as above, this is evident since the Hilbert space $\HL^{(\ln)}$ is a closed subspace of the Hilbert space $L^2(dm_\ln)$ for some quasi-invariant measure $m_\ln$. The converse is Theorem 3.1 of \cite{BM1} for tube type domains and Theorem 5.1 of \cite{AZ} in general.

The commuting tuple $\widehat\Ml$ of multiplication  by the coordinate functions on the Hilbert space $L^2(dm_\ln)$ induces a $*$ - homomorphism $\widehat\lF_\ln:\CL(\lO_{i,r})\to\LL(L^2(dm_\ln))$, namely, $\widehat\lF_\ln(f)=f(\widehat\Ml)$, $f\in\CL(\lO_{i,r})$, the space of continuous functions on $\lO_{i,r}$ and $\ln\in W_{\rm sub}$. The quasi-invariance of the measure $m_\ln$ ensures that 
$\widehat U^{(\ln)}$ is unitary and therefore the triple $(L^2(dm_\ln),\widehat U^{(\ln)},\widehat\lF_\ln)$ is a {\bf system of imprimitivity} in the sense of Mackey \cite[chapter 6]{V}:  
\be{imp}
(\widehat U_g^{(\ln)})^*\widehat\lF_\ln\ \widehat U^{(\ln)}=g\.\widehat\lF_\ln,\,\ g\in G,\ee
where $((g\.\widehat\lF_\ln)f)(z)=f(g\.z).$ Since the representation $\widehat U^{(\nu)}$ leaves the subspace $\HL^{(\nu)}$ invariant as well, we see that 
$$(\HL^{(\ln)},U^{(\ln)},\lF_\ln)=(L^2(dm_\ln),\widehat U_g^{(\ln)},\widehat\lF_\ln)_{|\,\HL^{(\ln)}}, \,\,\ln\in W_{\rm sub},$$ 
is the restriction of an imprimitivity. 

Recall that the $*$-homomorphism $\widehat\lF$ must be given by the formula $\widehat\lF(f)=\widehat\Ml_f=f(\widehat\Ml)$, 
$f\in\CL(\lO_{i,r})$, $0\le i\le r$, via the usual functional calculus. The group $G$ acts on the space of continuous functions via 
$(g^{-1}\.f)(z)=f(g\.z)=(f\oc g)(z)$. Therefore, 
$$\widehat\lF(g\.f)=\widehat\Ml_{f\oc g}=(f\oc g)(\widehat\Ml).$$ 
Choosing $f$ to be the coordinate functions, we see that the imprimitivity condition \er{imp} of Mackey is equivalent to the homogeneity of the commuting tuple $\Ml$, relative to the group $G$, of the commuting tuple $\widehat\Ml$, namely,
\be{homog}
U_g\Ml U_g^*:=(U_gM_1U_g^*,\;,U_g^* M_d U_g)=g\.\Ml,\,\, g\in G,\ee
where $g\.\Ml=(g_1(\Ml),\;,g_d(\Ml))$. Here $g_i$, $1\le i\leq d$, are the components of $g$ in $G$, when it is thought of as an injective biholomorphic map on $\lO$.  This notion for a single operator is from \cite{M} and for a commuting tuple is from \cite{MS}, see also \cite{BM1,BM2}. 
For $\ln$ in the Wallach set, the multiplication by the coordinate functions acting on the Hilbert space of holomorphic functions 
$\HL^{(\ln)}$ are bounded if and only if $\ln\in(\tfrac{a}{2}(r-1),\oo)$, the continuous part of the Wallach set, see \cite[Theorem 4.1]{AZ} and \cite[Theorem 1.1]{BM1}. Since the kernel function of the Hilbert space $\HL^{(\ln)}$ is a power of the Bergman kernel, it also transforms like the Bergman kernel ensuring that the the operator $\Ml$ on this Hilbert space is $G$-homogeneous for all $\ln$ in the continuous part of the Wallach set. A simple computation involving the curvature shows that these are the only $G$-homogeneous operators in the Cowen-Douglas class $B_1(\lO)$. The details are in \cite{MS} for the case of rank $r=1$. The proofs in the general case can be obtained using \cite[Proposition 4.4]{AZ} and spectral mapping properties of the Taylor spectrum of the commuting tuple 
$\Ml$. 

It is clearly of interest to study homogeneity, or equivalently, imprimitivity relative to {\bf subgroups} of the group $G.$ This already occurs in the study of spherically balanced tuples of operators \cite[Definition 1.1]{CY}. In this case, the domain is the Euclidean unit ball $\Bl_d$ and the group is the maximal compact subgroup $K$ of the automorphism group $G$ of $\Bl_d$. The group $K$ can be identified with the unitary group $U(d)$, it acts on $\Bl_d$ by the rule: 
$(U,z)\mapsto U(z)$, $z\in\Bl_d,\ U\in U(d)$. Let $\Tl$ be a commuting $d$-tuple of operators acting on a complex separable Hilbert space $\HL$. The usual functional calculus gives  
$$U\.\Tl=\(\S_{j=1}^d U_{1j}T_j,\;,\S_{j=1}^d U_{d,j}T_j\),\,\, U\in K.$$  
The commuting $d$-tuple of operators $\Tl$ is said to be ``spherically symmetric'', or equivalently, $K${\bf-homogeneous} if 
$\lG_U^*\Tl\lG_U=U\.\Tl$ for each $U$ in $K$ and some unitary $\lG_U$ on $\HL$. In general, $\lG$ need not be a unitary representation. However, we will assume that a choice of $\lG_U$ exists such that the map $U\to\lG_U$ is a unitary homomorphism. What we have said about the Euclidean ball applies equally well to the case of a bounded symmetric domain. So, we speak freely of $K$-homogeneous operators, where $\lO=G/K$. To describe this more general situation, we recall some basic notions from the representation theory of the group $K$. 

Let $\m\in\Nl_+^r$ be a partition of length $r$. Let $\PL_\m$ denote the space of irreducible $K$-invariant homogeneous polynomials of isotypic type $\m,$ having total degree $|\m|$. These are mutually inequivalent as $K$-modules and $\PL=\S_{\m\in\Nl_+^r}\PL_\m$ is the Peter-Weyl decomposition of the polynomials $\PL$ under the action of the group $K$. Now, equip the submodules $\PL_\m$ with the Fischer-Fock inner product $(p|q)_\m=(q^*(\dl)(p))(0)$, where $q^*(z)=\o{q(\o z)}.$ Let $E^\m$ be the reproducing kernel of the finite dimensional space $\PL_\m$. Then the Faraut-Kor\'anyi formula for the reproducing kernel $K^{(\ln)}$ of the Hilbert space $\HL^{(\ln)}$ is 
\be{7}K^{(\ln)}=\S_{\m\in\Nl_+^r}(\ln)_\m E^\m,\ee
where $(\ln)_\m:=\P_{j=1}^r(\ln-\f a2(j-1))_{m_j}$ are the generalized Pochhammer symbols. We have pointed out that the commuting tuple of multiplication operators $\Ml$ on the Hilbert space $\HL^{(\ln)}$ is $G$-homogeneous, therefore, it is also $K$-homogeneous. What are the other $K$-homogeneous operators? Since $\PL_\m$ is a $K$ irreducible module, it follows that the Hilbert space 
$\HL^{(\boldsymbol a)},$ obtained by setting $K^{(\boldsymbol a)}=\S_{\m\in\Nl_+^r}a_\m E^\m$ for an arbitrary choice of positive numbers $a_\m$ is a weighted direct sum of the $K$ modules $\PL_\m$.  Hence the commuting tuple of multiplication operators $\Ml$ on 
$\HL^{(\boldsymbol a)}$ is $K$-homogeneous. It is shown in \cite{GKP}, under some additional hypothesis, that these are the only $K$ - homogeneous operators. 

If the rank $r=1$, then a full description of all multi-shifts within the class of spherically symmetric operators is given in 
\cite[Theorem 2.5]{CY}. In the present set-up, this characterization amounts to saying that a multi-shift on a Hilbert space $\HL$ with reproducing kernel $K:\Bl_d\xx\Bl_d\to\Cl$ is spherically symmetric if and only if the kernel is of the form 
$$\S_n a_n \langle\boldsymbol z,\boldsymbol w\rangle^n$$
for $\boldsymbol z, \boldsymbol w\in\Bl_d$. It then follows that several properties of the commuting tuple of multiplication operators 
$\Ml$ on the Hilbert space are determined by the ordinary shift with weight sequence $\big \{\(\tfrac{a_n}{a_{n+1}}\)^{1/2}\big\}$, $n\ge 0$, see \cite[Theorem 5.1]{CY}. 

\section{\bf Spectral varieties and boundary orbits}\label{r}
In this section we describe the Jordan theoretic background needed for the rest of the paper. For details, cf. \cite{FK2,Lo2}. Let $V$ be an irreducible hermitian Jordan triple of rank $r.$ Every element $z\in V$ has a {\bf spectral decomposition}
$$z=\S_{i=1}^r\ll_i c_i$$
where the singular values $\ll_1\ge\ll_2\ge\;\ge\ll_r\ge 0$ are uniquely determined by $z,$ and $c_1,\;,c_r$ is a frame of minimal orthogonal tripotents. The largest singular value $\|z\|:=\ll_1$ defines a (spectral) norm on $V$ and the (open) unit ball
$$\lO=\{z\in V:\ \|z\|<1\}$$ 
is a bounded symmetric domain. It is a fundamental fact \cite{Lo2} that, conversely, every hermitian bounded symmetric domain can be realized, in an essentially unique way, as the spectral unit ball of a hermitian Jordan triple. In this paper we use the Jordan algebraic approach to study analysis on symmetric domains and related geometric structures. 

The compact group $K$ acts transitively on the set of frames. Hence, for fixed $\ll=(\ll_1,\ll_2,\;,\ll_r),$ the {\bf level set}
\be{1}V(\ll):=\{z=\S_{i=1}^r\ll_i c_i:\ (c_i)\mbox{ frame}\}\ee
is a compact $K$-orbit. As a special case we obtain the compact manifold 
$$S_k:=V(1^k,0^{r-k})$$
of all {\bf tripotents} of rank $k,$ where $0\le k\le r.$ Every union of such level sets \er{1} is $K$-invariant but may be an orbit of a larger group. As an example, for $0\le\l\le r,$ the {\bf Jordan-Kepler manifold}  
$$\r V_\l=\U_{\ll_1\ge\;\ge\ll_\l>0}V(\ll_1,\;,\ll_\l,0^{r-\l}),$$
consisting of all elements of rank $\l,$ is a complex manifold which is an orbit under the complexified group $K^\Cl.$ Its closure
$$V_\l=\U_{\ll_1\ge\:\ge\ll_\l\ge 0}V(\ll_1,\;,\ll_\l,0^{r-\l})=\U_{0\le j\le\l}\r V_j$$
consists of all elements of rank $\le\l$ and is called the {\bf Jordan-Kepler variety}. Its regular (smooth) part coincides with 
$\r V_\l.$ For $\l=r$ we have $V_r=V$ and $\r V_r=\r V$ is an open dense subset, consisting of all elements of maximal rank. As another example the set
$$\lO_{k,r}=\U_{1>\ll_{k+1}\ge\;\ge\ll_r\ge 0}V(1^k,\ll_{k+1},\;,\ll_r)$$
is an orbit under the identity component $G$ of the biholomorphic automorphism group of $\lO.$ For $k=0,$ we have $\lO_{0,r}=\lO.$ For $k>0$ we obtain a {\bf boundary orbit} which is not a complex submanifold. It has the closure
$$\o\lO_{k,r}=\U_{1\ge\ll_{k+1}\ge\;\ge\ll_r\ge0}V(1^k,\ll_{k+1},\;,\ll_r)=\U_{i=k}^r\lO_{i,r}.$$
The intersection
$$S_k=\r V_k\ui\lO_{k,r}$$
is the common {\bf center} of $\r V_k$ and $\lO_{k,r}.$ In particular, $S_0=\{0\}$ is the center of $\lO.$ The triple
$$\r V_k\ci S_k\ic\lO_{k,r}$$
is a special case of {\bf Matsuki duality}, which gives a 1-1 correspondence between $G$-orbits and $K^\Cl$-orbits in a flag manifold 
(which in our case is the so-called conformal hull of $V$), determined by the condition that the intersection is a $K$-orbit. 
For $k=r$ we obtain the {\bf Shilov boundary}
$$\lO_{r,r}=S_r=:S$$
which is the only closed stratum of $\dl\lO$ and is its own center. Generalizing both the Jordan-Kepler varieties and the boundary orbits, we define for $0\le k\le\l\le r$ the $K$-invariant set
$$\r\lO_{k,\l}:=\r V_\l\ui\lO_{k,r}=\U_{1>\ll_{k+1}\ge\;\ge\ll_\l>0}V(1^k,\ll_{k+1},\;,\ll_\l,0^{r-\l}).$$
It has the closure
$$\o\lO_{k,\l}=V_\l\ui\o\lO_{k,r}=\U_{1\ge\ll_{k+1}\ge\;\ge\ll_\l\ge 0}V(1^k,\ll_{k+1},\;,\ll_\l,0^{r-\l})
=\U_{k\le i\le j\le\l}\r\lO_{i,j}.$$
We also use the 'partial closure' 
$$\lO_{k,\l}:=V_\l\ui\lO_k=\U_{1>\ll_{k+1}\ge\;\ge\ll_\l\ge0}V(1^k,\ll_{k+1},\;,\ll_\l,0^{r-\l})=\U_{j=k}^\l\r\lO_{k,j}.$$
Then
$$\lO_\l:=\lO_{0,\l}=V_\l\ui\lO$$
is the so-called {\bf Kepler ball}.

Our first goal is to describe a {\bf facial decomposition} of the $K$-invariant sets $\lO_{k,\l}.$ For a tripotent $c$ we consider the 
{\bf Peirce decomposition} \cite{Lo1,Lo2}
$$V=V_2^c\op V_1^c\op V_0^c.$$
Define $V^c:=V_0^c$ and $\lO^c:=\lO\ui V^c.$ This is itself a bounded symmetric domain of rank $r-k,$ when $c\in S_k.$
 
\begin{proposition}\label{d} There exist fibrations (disjoint union)
\be{9}\r\lO_{k,\l}=\U_{c\in S_k}c+\r\lO_{\l-k}^c\ic\lO_{k,\l}=\U_{c\in S_k}c+\lO_{\l-k}^c=\U_{i=k}^\l\r\lO_{k,i}\ee
\end{proposition}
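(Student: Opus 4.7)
The plan is to read off a canonical decomposition $z = c + z'$, with $c \in S_k$ and $z' \in V^c$, directly from the spectral decomposition of $z$, thereby translating all three equalities in \er{9} into Jordan-theoretic statements about spectral frames.

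For the inclusion $\r\lO_{k,\l} \ic \U_{c \in S_k}(c + \r\lO_{\l-k}^c)$, let $z \in \r\lO_{k,\l}$ have spectral decomposition $z = \S_{i=1}^r \ll_i c_i$ with $\ll_1 = \cdots = \ll_k = 1 > \ll_{k+1} \ge \cdots \ge \ll_\l > 0$ and $\ll_i = 0$ for $i > \l$. Setting $c := c_1 + \cdots + c_k \in S_k$, orthogonality of the frame forces $c_{k+1}, \ldots, c_r \in V_0^c = V^c$, so $z - c = \S_{i > k} \ll_i c_i$ lies in $V^c$. Viewed inside the hermitian Jordan triple $V^c$ of rank $r - k$, the element $z - c$ has exactly $\l - k$ nonzero spectral values, all in $(0,1)$, placing it in $\r\lO_{\l-k}^c$. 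Conversely, given $c \in S_k$ with a decomposition $c = c_1 + \cdots + c_k$ into orthogonal minimal tripotents, and given $z' \in \r\lO_{\l-k}^c$ with spectral decomposition $z' = \S_j \mu_j d_j$ inside $V^c$, the orthogonal system $(c_1, \ldots, c_k, d_1, \ldots)$ extends to a frame of $V$, and $c + z'$ has the required singular-value profile, so $c + z' \in \r\lO_{k,\l}$. Replacing the strict inequality $\ll_\l > 0$ by $\ll_\l \ge 0$ yields the analogous fibration of $\lO_{k,\l}$.

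The main obstacle is the \emph{disjointness} of the two unions, which amounts to showing that in any decomposition $z = c + z'$ of the above form, the tripotent $c$ is determined by $z$ alone. I would prove this via the odd Jordan-triple iterates: setting $z^{\langle 1\rangle} := z$ and $z^{\langle 2m+1\rangle} := \{z, z^{\langle 2m-1\rangle}, z\}$, orthogonality of any spectral frame gives $z^{\langle 2m+1\rangle} = \S_i \ll_i^{2m+1} c_i$. Since $\ll_i = 1$ for $i \le k$ and $\ll_i < 1$ otherwise, this sequence converges in norm to $\S_{i \le k} c_i = c$, so $c = \lim_m z^{\langle 2m+1\rangle}$ is an intrinsic invariant of $z$, and any two decompositions of the required form must coincide.

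Finally, the equality $\lO_{k,\l} = \U_{i=k}^\l \r\lO_{k,i}$ is immediate from the definitions by partitioning elements of $\lO_{k,\l}$ according to their exact rank $i \in \{k, \ldots, \l\}$. The Jordan-theoretic inputs (existence of orthogonal minimal decompositions of tripotents and the stability of the Peirce-$2$ component under odd iterated triple products) are standard, cf.\ \cite{Lo1,Lo2,FK2}.
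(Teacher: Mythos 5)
Your proof is correct and follows essentially the same route as the paper: read off $c=c_1+\cdots+c_k$ and $w=z-c$ from the spectral decomposition, and partition by exact rank for the stratification $\lO_{k,\l}=\U_{i=k}^\l\r\lO_{k,i}$. The only difference is that where the paper simply cites Loos \cite[Section 6]{Lo2} for the disjointness of the boundary components $c+\lO^c$, you supply a self-contained argument via the odd triple powers $z^{\langle 2m+1\rangle}\to c$, which is a valid and standard way to see that the tripotent part of the spectral decomposition is intrinsic to $z$.
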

\begin{proof} If $z\in\r\lO_{k,\l}$ then
$$z=c_1+\ldots+c_k+\S_{k<i\le\l}\ll_i c_i$$
for some frame $(c_i)$ and $1>\ll_{k+1}\ge\;\ge\ll_\l>0.$ It follows that $c:=c_1+\ldots+c_k\in S_k$ and 
$$w:=\S_{k<i\le\l}\ll_i c_i\in\lO^c\ui\r V_{\l-k}=\r\lO_{\l-k}^c.$$
For different tripotents $c,c'\in S_k$ the boundary components $c+\lO^c$ and $c'+\lO^{c'}$ are disjoint \cite[Section 6]{Lo2}. This proves the first assertion. If $z\in\lO_{k,\l}$ then we require only $\ll_\l\ge0.$ Therefore 
$$w\in\lO^c\ui V_{\l-k}=\lO_{\l-k}^c=\U_{i=k}^\l\r\lO_{i-k}^c.$$
It follows that
$$\lO_{k,\l}=\U_{c\in S_k}c+\lO_{\l-k}^c=\U_{c\in S_k}\U_{i=k}^\l c+\r\lO_{i-k}^c=\U_{i=k}^\l\U_{c\in S_k}c+\r\lO_{i-k}^c
=\U_{i=k}^\l\r\lO_{k,i}.$$
\end{proof}

For $k\le i\le\l$ the set $\r\lO_{k,i}$ is called the {\bf $i$-th stratum} of $\lO_{k,\l}.$ In the special case $\l=r$ we obtain a stratification
$$\lO_{k,r}=\U_{c\in S_k}c+\lO_{r-k}^c=\U_{i=k}^r\r\lO_{k,i}$$
of the boundary $G$-orbit.

\section{\bf Hypergeometric measures}\label{s}
If $V$ is an irreducible hermitian Jordan triple of rank $r$, with automorphism group $K,$ define the $K$-average
$$f^\nl(t):=\I_K dk\ f(kt)$$
for $t\in\Rl_{++}^r:=\{t\in\Rl^r:\ t_1\ge\;\ge t_r\ge0\}.$ Any $K$-invariant measure $\lm$ on $V$ (or a $K$-invariant subset) has a 
{\bf polar decomposition}
$$\I\lm(dz)\ f(z)=\I\t\lm(dt_1,\;,dt_r)\ f^\nl(\F t_1,\;,\F t_r)$$
for a uniquely defined measure $\t\lm$ on $\Rl_{++}^r$ (or a suitable subset), called the {\bf radial part} of $\lm.$ In the following we use various unspecified constants, all of which are explicitly known.

\begin{proposition} The Lebesgue measure $dz=:\ll_r(dz),$ for the normalized $K$-invariant inner product on $V,$ has the radial part
\be{4}\t\ll_r(dt_1,\;,dt_r)=\x{const.}\ \P_{i=1}^r dt_i\ t_i^b\ \P_{1\le i<j\le r}(t_i-t_j)^a\ee
on $\Rl_{++}^r.$ Here $a,b$ denote the so-called characteristic multiplicities of $V$ \cite[Section 17]{Lo1}.
\end{proposition}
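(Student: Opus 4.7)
The plan is to reduce the proposition to a Jacobian calculation for the spectral parametrization $\lF\colon K/K_0\xx\lD\ti\r V,$ $(kK_0,\ll)\mapsto k\.(\ll_1c_1+\:+\ll_r c_r),$ where $(c_i)$ is a fixed Jordan frame with joint stabilizer $K_0\ic K,$ $\lD=\{\ll\in\Rl^r:\ll_1>\:>\ll_r>0\}$ is the open Weyl chamber, and $\r V$ denotes the open dense subset of elements with distinct positive singular values. Since Lebesgue measure $dz$ is $K$-invariant and $f^\nl$ depends only on $\ll,$ the Fubini decomposition of $\lF^*(dz)$ as a Haar measure on $K/K_0$ times a measure on $\lD$ realizes the polar formula; the radial part $\t\ll_r$ is then the latter factor, read off from $|\det d\lF|.$

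At the basepoint $t_\ll=\S_i\ll_i c_i$ for $\ll\in\lD,$ I identify $T_{t_\ll}V\cong V$ with the joint Peirce decomposition $V=\op_{0\le i\le j\le r}V_{ij},$ where $V_{00}=0,$ $\dim_\Cl V_{ii}=1,$ $\dim_\Cl V_{ij}=a$ for $1\le i<j\le r,$ and $\dim_\Cl V_{i0}=b.$ The real radial direction $\dl/\dl\ll_i$ maps to $c_i\in V_{ii}.$ The complement $\mathfrak m\ic\kL$ of the stabilizer Lie algebra $\kL_0$ decomposes into blocks indexed by the off-diagonal Peirce spaces via the identification with triple derivations $v\mapsto D(c_k,v)-D(v,c_k)$ for $v\in V_{kj}.$ Applying the Peirce rule $D(c_k)v=\tfrac12(\ld_{ki}+\ld_{kj})v$ for $v\in V_{ij}$ (with $\ld_{k0}=0$), one obtains block-by-block: the phase direction of $V_{ii}$ is rotated at rate $\ll_i,$ contributing $\ll_i$; the $V_{ij}$-block ($i<j$) is scaled, as a $\Cl$-linear endomorphism of the complex $a$-dimensional space, by $\pm(\ll_i^2-\ll_j^2),$ contributing $(\ll_i^2-\ll_j^2)^a$ to the real determinant; each $V_{i0}$-block is scaled by $\pm\ll_i$ on its $b$ complex directions, contributing $\ll_i^{2b}.$

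Collecting factors gives $|\det d\lF|(\ll)=C\.\P_{i=1}^r\ll_i^{2b+1}\P_{1\le i<j\le r}(\ll_i^2-\ll_j^2)^a$ times the Haar density on $K/K_0.$ Substituting $t_i=\ll_i^2,$ so that $\ll_i\,d\ll_i=\tfrac12\,dt_i,$ the factor $\ll_i^{2b+1}d\ll_i$ becomes $\tfrac12 t_i^b\,dt_i$ and $(\ll_i^2-\ll_j^2)^a$ becomes $(t_i-t_j)^a,$ producing the claimed expression for $\t\ll_r.$ The main obstacle is the block-by-block scaling in the preceding paragraph: while the identification of $\mathfrak m$ with the off-diagonal Peirce space is standard, tracking the exact power of $\ll_i^2-\ll_j^2$ requires combining the two contributions to the $V_{ij}$-block coming from $D(c_i,v)-D(v,c_i)$ and $D(c_j,v)-D(v,c_j),$ and computing the resulting $2a\xx2a$ real Jacobian minor. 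Up to a constant, this minor reduces to the rank-two Peirce subsystem spanned by $c_i,c_j,V_{ii},V_{jj},V_{ij}$ and recovers the classical SVD-type Jacobian, which is where the characteristic multiplicities $a,b$ enter the final formula.
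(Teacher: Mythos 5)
Your argument is correct in outline and arrives at the right density, but it takes a genuinely different route from the paper. The paper does not compute any Jacobian directly: it composes two cited integral formulas, namely the reduction $\I_V dz\,f(z)=\x{const.}\I_{\lL_e}dx\,N_e(x)^b\I_K dk\,f(k\F x)$ of \cite[Proposition X.3.4]{FK2} and \cite[(2.1.1)]{AU} (which is where the multiplicity $b$ enters, via the determinant polynomial $N_e$ of the Peirce $2$-space), followed by the classical radial decomposition $\P_{i<j}(t_i-t_j)^a$ for Euclidean Jordan algebras \cite[Theorem VI.2.3]{FK2} applied to the cone $\lL_e$ (which is where $a$ enters). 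You instead parametrize $\r V$ by $K/K_0\xx\lD$ and compute $|\det d\lF|$ block by block in the joint Peirce decomposition; after $t_i=\ll_i^2$ this reproduces \er{4}, and your $V_{ii}$- and $V_{i0}$-contributions check out against the matrix case $\Cl^{r\xx s}$. What your route buys is a self-contained, purely infinitesimal proof; what it costs is exactly the step you flag as the main obstacle, the $2a\xx2a$ real minor for each $V_{ij}$-block. Be aware that your intermediate description of that block as a scalar $\Cl$-linear scaling by $\pm(\ll_i^2-\ll_j^2)$ cannot be taken literally: a scalar $c$ acting $\Cl$-linearly on $\Cl^a$ has real determinant $|c|^{2a}$, which would give the wrong exponent $2a$. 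The correct statement, which your final paragraph implicitly supplies, is that the coupled contributions of $D(c_i,v)-D(v,c_i)$ and $D(c_j,v)-D(v,c_j)$ form a $2\xx2$ block system over $V_{ij}$ whose complex determinant is $\pm(\ll_i^2-\ll_j^2)^{a/2}$ per conjugate pair, hence real determinant $(\ll_i^2-\ll_j^2)^a$; this rank-two reduction must actually be carried out (or cited) for the proof to be complete, and it is precisely the computation the paper avoids by invoking \cite[Theorem VI.2.3]{FK2}.
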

\begin{proof} We start with the well known formula
\be{2}\I_X dx\ f(x)=\x{const.}\ \I_{\Rl_+^r}dt_1\:dt_r\ \P_{1\le i<j\le r}(t_i-t_j)^a\I_L dh\ f(ht)\ee
for a euclidean Jordan algebra $X$ with automorphism group $L$ \cite[Theorem VI.2.3]{FK2}. Let $\lL_e$ be the symmetric cone of the Peirce 2-space $V_2^e$ for some maximal tripotent $e\in S_r$ \cite{FK2}. Then 
\be{3}\I_V dz\ f(z)=\x{const.}\ \I_{\lL_e}dx\ N_e(x)^b\I_K dk\ f(k\F x)\ee
by \cite[Proposition X.3.4]{FK2} (for the tube domain case $b=0$) and \cite[(2.1.1)]{AU} (for the general case). Applying \er{2} to the right hand side of \er{3} we obtain 
$$\I_V dz\ f(z)=\x{const.}\ \I_{\Rl_{++}^r}\P_{i=1}^r dt_i\ t_i^b\ \P_{1\le i<j\le r}(t_i-t_j)^a\ f^\nl(\F t).$$
\end{proof}

\begin{proposition} For $\l\le r,$ consider the map 
$$\la:\Rl_{++}^\l\to\Rl_{++}^r,\quad\la(t_1,\;,t_\l):=(t_1,\;,t_\l,0^{r-\l}).$$
Then the {\bf Riemann measure} $\ll_\l$ on the Kepler variety $\r V_\l,$ induced by the inner product $(z|w),$ has the radial part 
$\t\ll_\l=\la_*\h M_\l,$ where
\be{5}\h M_\l(dt_1,\;,dt_\l):=\x{const.}\ \P_{i=1}^\l\ dt_i\ t_i^{d_1^c/\l}\ \P_{1\le i<j\le\l}(t_i-t_j)^a\ee
and $d_1^c/\l=b+a(r-\l).$ If $\l=r,$ then $d_1^e=rb$ and \er{5} reduces to \er{4}.
\end{proposition}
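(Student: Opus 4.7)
The plan is to mirror the proof of the previous proposition, with the maximal tripotent $e\in S_r$ and the full space $V$ replaced by a tripotent $c\in S_\l$ of rank $\l$ and the Kepler variety $\r V_\l.$ The Peirce $2$-space $V_2^c$ is a euclidean Jordan algebra of rank $\l$ with the same characteristic multiplicity $a$ as $V,$ and its symmetric cone $\lL_c\ic V_2^c$ plays the role of $\lL_e.$ A short dimension count from $\dim_\Cl V=r+a\binom r2+rb$ applied to the Peirce pieces gives $\dim_\Cl V_1^c=\l b+a\l(r-\l),$ so the claimed exponent $d_1^c/\l=b+a(r-\l)$ is exactly the fibrewise codimension that must appear in the orbit integration formula.

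The key step is to establish the Kepler analog of \er{3}: for $c\in S_\l,$
\begin{equation*}
\I_{\r V_\l}\ll_\l(dz)\,f(z)=\x{const.}\,\I_{\lL_c}dx\,N_c(x)^{b+a(r-\l)}\I_K dk\,f(k\F x),
\end{equation*}
where $N_c$ is the Jordan determinant on $V_2^c.$ This expresses the fact that the orbit map $K\xx\lL_c\ti\r V_\l,$ $(k,x)\mapsto k\F x,$ is generically a submersion whose Jacobian, computed by restricting the Hermitian metric of $V$ to the tangent space $V_2^c\op V_1^c$ of $\r V_\l$ at $\F x,$ contributes the weight $N_c(x)^{d_1^c/\l}$ from the $V_1^c$-directions via the quadratic representation. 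I would derive this along the lines of \cite[Proposition X.3.4]{FK2} and \cite[(2.1.1)]{AU}, or else invoke the corresponding result from \cite{U5}. Feeding this into \er{2} applied to the rank-$\l$ Jordan algebra $V_2^c,$ and using $N_c(ht)=\P_i t_i$ by $L$-invariance of $N_c,$ the right hand side collapses to
\begin{equation*}
\x{const.}\I_{\Rl_{++}^\l}\P_{i=1}^\l dt_i\,t_i^{b+a(r-\l)}\P_{1\le i<j\le\l}(t_i-t_j)^a\,f^\nl(\F{t_1},\;,\F{t_\l},0,\;,0).
\end{equation*}
Since the radial part is defined using the squares of singular values and $\la(t)=(t_1,\;,t_\l,0^{r-\l}),$ this is precisely $\la_*\h M_\l.$ For $\l=r$ the tripotent is maximal, $V_0^c=0,$ the exponent $b+a(r-r)=b$ recovers \er{4}, and $d_1^e=rb$ is indeed $\dim_\Cl V_1^e.$

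The main obstacle is the first step: establishing the Peirce-type integration formula on the singular algebraic variety $\r V_\l.$ Unlike the full space $V,$ the Kepler variety is not itself a hermitian Jordan triple, so the extraction of the correct exponent $d_1^c/\l$ requires an explicit Jacobian calculation for the orbit map on the transverse Peirce $1$-space, where the quadratic representation $B(\F x,\F x)|_{V_1^c}$ produces the claimed power of $N_c(x).$
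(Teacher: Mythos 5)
Your proposal follows essentially the same route as the paper: the paper's proof consists precisely of quoting the integration formula $\I_{\r V_\l}\ll_\l(dz)f(z)=\x{const.}\I_{\lL_c}dx\,N_c(x)^{d_1^c/\l}f^\nl(\F x)$ from \cite[Theorem 3.4]{EU} and then applying \er{2} to the rank-$\l$ euclidean Jordan algebra $V_2^c$, exactly as you do, and your dimension count $d_1^c=\l b+a\l(r-\l)$ matches. The only difference is that you sketch a Jacobian/submersion derivation of the key formula where the paper simply cites it, and you correctly identify that formula as the one nontrivial input.
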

\begin{proof} By \cite[Theorem 3.4]{EU} we have
$$\I_{\r V_\l}\ll_\l(dz)\ f(z)=\x{const.}\ \I_{\lL_c^2}dx\ N_c(x)^{d_1^c/\l}\ f^\nl(\F x)
=\x{const.}\ \I_{\Rl_{++}^\l}\P_{i=1}^\l\ dt_i\ t_i^{d_1^c/\l}\ \P_{1\le i<j\le\l}(t_i-t_j)^a\ f^\nl(\F t)$$
by applying \er{2} to the Peirce 2-space $V_2^c$ and its positive cone $\lL_c.$ 
\end{proof}

Let $\PL(V)$ denote the polynomial algebra of a hermitian Jordan triple $V,$ endowed with the Fischer-Fock inner product $(p|q)_V$ for the normalized $K$-invariant inner product $(z|w)$ on $V.$ Let
$$\PL(V)=\S_\m\PL_\m(V)$$
be the {\bf Peter-Weyl decomposition} of $\PL(V)$ under the group $K$ \cite[Theorem 2.1]{FK1}. Here $\m$ runs over the set $\Nl_+^r$ of all integer {\bf partitions} 
$$\m=(m_1\ge\;\ge m_r)$$ 
of length $\le r.$ For a complex parameter $\ln$ let
$$(\ln)_\m=\P_{j=1}^r(\ln-\f a2(j-1))_{m_j}$$ 
denote the multivariate {\bf Pochhammer symbol}. Then the identity
\be{18}(\ln)_{\m+n}=(\ln+n)_\m\ (\ln)_n\ee
holds for any integer $n\ge 0.$

Let $x_1,\;x_h,y_0,\;y_h$ be positive parameters. We say that a $K$-invariant measure $\lm$ supported on $\o\lO$ (or a $K$-invariant subset) is {\bf hypergeometric of type} $\binom{y_0,\;,\ y_h}{x_1,\;,\ x_h}$ if 
\be{6}(p|q)_\lm:=\I{}\lm(dz)\ \o{p(z)}\ q(z)=\f{\P_{i=1}^h(x_i)_\m}{\P_{i=0}^h(y_i)_\m}\ (p|q)_V\ee
for all $\m\in\Nl_+^r$ and $p,q\in\PL_\m(V).$ More generally, for $\l\le r,$ a $K$-invariant measure $\lm$ supported on $\o\lO_\l$ (or a $K$-invariant subset) is {\bf $\l$-hypergeometric} if \er{6} holds for all partitions $\m\in\Nl_+^\l$ of length $\le\l.$ By the Stone-Weierstrass approximation theorem and $K$-invariance, the condition \er{6} determines the measure $\lm$ uniquely, but not every choice of parameters defines such a measure (a kind of multi-variate moment problem).

Let $\lD(z,w)$ be the Jordan triple determinant \cite{FK1}.

\begin{proposition}\label{x} Let $p:=2+a(r-1)+b$ be the genus of $\lO,$ and let $\ln>p-1.$ Then the probability measure $M_\ln$ on $\lO,$ defined by
\be{17}\I_\lO M_\ln(dz)\ f(z)=\x{const.}\I_\lO d\lz\ \lD(\lz,\lz)^{\ln-p}\ f(\lz)\ee
is hypergeometric of type $\binom{\ln}{}.$ 
\end{proposition}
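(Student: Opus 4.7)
The plan is to reduce the claim to the classical Faraut-Kor\'anyi expansion \er{7} via $K$-invariance and Schur's lemma. Since both $\lD(z,z)$ and the normalized Lebesgue measure $dz$ are $K$-invariant, the probability measure $M_\ln$ is $K$-invariant, and hence so is the sesquilinear form $(p|q)_{M_\ln}$ on $\PL(V).$ The Peter-Weyl components $\PL_\m$ are $K$-irreducible and pairwise inequivalent, so Schur's lemma forces $(p|q)_{M_\ln}=0$ whenever $p\in\PL_\m,\ q\in\PL_\n,$ $\m\ne\n,$ and $(p|q)_{M_\ln}=c_\m(p|q)_V$ on $\PL_\m\xx\PL_\m$ for some positive scalar $c_\m$ depending on $\ln$ and $\m.$ The content of the proposition thus reduces to the identification $c_\m=1/(\ln)_\m.$

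To pin down $c_\m,$ I would view $H^2(\lO,M_\ln)$ as a reproducing kernel Hilbert space in which polynomials are dense (the hypothesis $\ln>p-1$ guarantees integrability of the weight $\lD(z,z)^{\ln-p}$). The Schur decomposition of the previous paragraph forces its reproducing kernel to have the form
$$K^{(\ln)}(z,w)=\S_{\m\in\Nl_+^r}\f{1}{c_\m}\ E^\m(z,w).$$
On the other hand, the standard weighted Bergman theorem, applied under the probability normalization of \er{17}, identifies $K^{(\ln)}$ with the Jordan triple determinant power $\lD(z,w)^{-\ln},$ which by \er{7} expands as $\S_\m(\ln)_\m E^\m(z,w).$ Since the $E^\m$ live in distinct isotypic components of $\PL(V)$ and are therefore linearly independent, matching coefficients yields $c_\m=1/(\ln)_\m,$ which is precisely the defining condition \er{6} with $h=0$ and $y_0=\ln.$

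The one delicate point---and the place where the main obstacle sits---is verifying that the probability constant in \er{17} is the one making $K^{(\ln)}(z,w)=\lD(z,w)^{-\ln},$ rather than a nonzero scalar multiple of it. This is settled by a one-line check at the bottom partition $\m=(0,\;,0)$: the probability condition gives $(1|1)_{M_\ln}=1,$ while $(1|1)_V=1$ and $(\ln)_0=1,$ so $c_0=1=1/(\ln)_0,$ anchoring the normalization. After this anchor, the equalities for all higher $\m$ are automatic from the Faraut-Kor\'anyi theory, and no further analytic difficulty arises.
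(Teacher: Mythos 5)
Your proposal is correct and takes essentially the same route as the paper, whose entire proof is a one-line appeal to the Faraut--Kor\'anyi binomial formula \eqref{7}; you simply spell out the standard reproducing-kernel mechanics (Schur's lemma on the Peter--Weyl components, the identification $K^{(\ln)}(z,w)=\lD(z,w)^{-\ln}$ under the probability normalization, and coefficient matching) that the citation leaves implicit. The anchoring at $\m=0$ is the right way to fix the constant, and no gap remains.
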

\begin{proof} This follows from the Faraut-Kor\'anyi binomial formula \er{7} proved in \cite{FK1}.
\end{proof}

\begin{proposition}\label{y} For $1\le k\le r$ let $p_k:=2+a(r-k-1)+b$ be the genus for rank $r-k,$ and put
\be{15}\ln_k:=\f dr+\f a2(r-k)=p-1-\f a2(k-1)=1+b+\f a2(2r-k-1)=p_k+\f a2(k+1)-1.\ee
Then the probability measure $M_{k,r}$ on the $k$-th boundary orbit $\lO_{k,r},$ defined in terms of the fibration \er{9} by
\be{8}\I_{\lO_{k,r}}M_{k,r}(dz)\ f(z)=\x{const.}\I_{S_k}dc\I_{\lO^c}d\lz\ \lD(\lz,\lz)^{\ln_k-p_k}\ f(c+\lz)\ee
is hypergeometric of type $\binom{\ln_k}{}.$ 
\end{proposition}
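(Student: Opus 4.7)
The proof plan. I would follow the strategy of Proposition \ref{x}: reduce to a single Peter-Weyl scalar via $K$-equivariance, then evaluate that scalar using the Faraut-Kor\'anyi generating function on a lower-rank hermitian Jordan triple.

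Since $M_{k,r}$ is $K$-invariant and the Peter-Weyl summands $\PL_\m(V)$ are pairwise inequivalent $K$-irreducibles, Schur's lemma gives $(p|q)_{M_{k,r}} = \ls_\m (p|q)_V$ on $\PL_\m(V)$ (with cross-partition inner products vanishing), and the assertion is $\ls_\m = 1/(\ln_k)_\m$. By $K$-transitivity on $S_k$ and the $K$-equivariance of the Peirce decompositions, the definition \eqref{8} collapses, on $K$-invariant integrands, to a single-fibre integral at a fixed base tripotent $c_0 \in S_k$:
$$\int_{\lO_{k,r}} M_{k,r}(dz)\, F(z) = C_k \int_{\lO^{c_0}} d\lz\ \lD^{c_0}(\lz,\lz)^{\ln_k - p_k}\, F(c_0 + \lz).$$
From \eqref{15}, $\ln_k - p_k = \tfrac{a(k+1)}{2} - 1 > -1$ and $\ln_k > p_k - 1$, so Proposition \ref{x} applied to the rank-$(r-k)$ hermitian Jordan triple $V^{c_0}$ of genus $p_k$ identifies the normalized fibre measure as hypergeometric of type $\binom{\ln_k}{}$ relative to $V^{c_0}$.

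To lift this to rank $r$, I would apply the above reduction to the two-point generating kernel $\overline{\lD(z,w')^{-\mu}}\,\lD(z,w)^{-\mu}$, expand via Faraut-Kor\'anyi \eqref{7}, and invoke the Jordan-theoretic Peirce factorization of $\lD(c_0+\lz, w)$ at a tripotent $c_0$. In the classical matrix model this takes the form $\lD(c_0 + \lz, w) = \lD(c_0, w_2)\cdot\lD^{c_0}(\lz, \w{w}_0)$, where $\w{w}_0$ is a Bergman-operator modification of the Peirce-$0$ component of $w$ absorbing the cross-terms through $V_1^{c_0}$. The fibre integrand then factors into a Faraut-Kor\'anyi kernel on $V^{c_0}$ times a $w_2$-dependent factor; the $\lz$-integral is evaluable by the hypergeometric formula on $V^{c_0}$, producing the Pochhammer ratio $(\mu)_\n^{V^{c_0}}/(\ln_k)_\n^{V^{c_0}}$, while the $w_2$-factor contributes via another Faraut-Kor\'anyi expansion on the rank-$k$ Peirce-$2$ subalgebra. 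Matching Peter-Weyl coefficients in $(w, w')$ and using the trivial splitting
$$(\ln_k)_\m = \prod_{j=1}^{k}\Big(\ln_k - \tfrac{a(j-1)}{2}\Big)_{m_j}\cdot\prod_{j=k+1}^{r}\Big(\ln_k - \tfrac{a(j-1)}{2}\Big)_{m_j}$$
then yields $\ls_\m = 1/(\ln_k)_\m$.

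The main obstacle is the Peirce factorization of $\lD(c_0+\lz, w)$ at a partial tripotent, which relies on the Jordan-theoretic structure of the Bergman operator at a tripotent together with the correction $w_0 \to \w{w}_0$; once this factorization is in hand, the combinatorial Pochhammer identity is immediate. The specific value \eqref{15} of $\ln_k$ is precisely what aligns the ``upper'' Peirce-$2$ contribution (from the rank-$k$ factor) with the ``lower'' $V^{c_0}$ contribution (from the rank-$(r-k)$ integration) without additional correction factors: it is the Harish-Chandra parameter interpolating between the $k$ saturated directions (eigenvalue $1$) and the $r-k$ free directions, and any other choice would produce a non-trivial mismatch between the two Pochhammer types.
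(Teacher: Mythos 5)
The paper does not prove Proposition \ref{y} by direct computation: it cites \cite{BM1} for the matrix case $a=2$ (Schur-polynomial combinatorics) and \cite[Theorems 6.7 and 6.8]{AU} for the general case (quasi-invariance of the boundary measure under certain non-unimodular groups). Your plan is an attempt at a direct analytic proof, and its first half is sound: the reduction via Schur's lemma to a scalar $\ls_\m$ on each $\PL_\m(V),$ the collapse of \er{8} to a single fibre over a fixed $c_0\in S_k$ for $K$-averaged integrands, and the identification of the fibre measure as hypergeometric of type $\binom{\ln_k}{}$ relative to $V^{c_0}$ via Proposition \ref{x} (legitimate, since $\ln_k-p_k=\f a2(k+1)-1>-1$) are all correct.

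The gap is the lifting step from $V^{c_0}$ to $V,$ and it is not a routine technicality. First, the factorization $\lD(c_0+\lz,w)=\lD(c_0,w_2)\,\lD^{c_0}(\lz,\w w_0)$ at a rank-$k$ tripotent has to be established in the general Jordan triple setting, and the modification $w_0\mapsto\w w_0$ absorbing the $V_1^{c_0}$ cross-terms must be tracked through the subsequent Peter--Weyl expansion in $w.$ Second, and more seriously, ``matching Peter--Weyl coefficients'' is precisely the hard part: the restriction of $\PL_\m(V)$ to the fibre $c_0+V^{c_0}$ does not split as a tensor product of a rank-$k$ and a rank-$(r-k)$ piece, but branches over all $V^{c_0}$-partitions with nontrivial coefficients. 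Concretely, reindexing $j=k+i$ shows that the lower block of your ``trivial splitting'' of $(\ln_k)_\m$ equals the $V^{c_0}$-Pochhammer symbol at the \emph{shifted} parameter $\ln_k-\f a2k,$ whereas the fibre integration produces the $V^{c_0}$-Pochhammer symbol at parameter $\ln_k$; reconciling this shift with the $w_2$-factor is exactly where the branching data enter and where the known proofs resort either to Schur-function identities or to the group-theoretic quasi-invariance argument. The authors themselves remark after \er{16} that a direct proof of the equivalent radial integral identity in the general case ``would be of interest,'' i.e.\ the route you propose is not known to close. As written, your argument establishes the fibrewise (rank $r-k$) statement but not the asserted rank-$r$ hypergeometric property.
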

\begin{proof} For the special case $a=2,$ corresponding to the matrix Jordan triple $V=\Cl^{r\xx s},$ this is proved in \cite{BM1} using combinatorial properties of Schur polynomials. The general case \cite[Theorems 6.7 and 6.8]{AU} uses transformation properties under certain non-unimodular groups acting on the boundary.
\end{proof}

For the Shilov boundary $k=r$ $M_{r,r}(dz)$ is the unique $K$-invariant probability measure on $\lO_{r,r}=S,$ since $c+\lO^c=\{c\}$ is a singleton for each $c\in S=S_r.$ For $k=0$ we have $\lO_{0,r}=\lO$ and $p_0=p.$ In this case \er{8} reduces to \er{17} for 
$\ln_0=p-1+\f a2.$ However, in this case we may take any parameter $\ln>p-1.$ Given a frame of minimal orthogonal tripotents 
$e_1,\;,e_r$ of $V$ put
$$c_k:=e_1+\;+e_k.$$
Define
$$\Il_+^r:=\{s\in\Rl^r:\ 1\ge s_1\ge\;\ge s_r\ge 0\}.$$ 
The explicit realization \er{8} of $M_{k,r}$ implies the following proposition: 
\begin{proposition} For $1\le k\le r$ consider the map 
$$\lb:\Il_+^{r-k}\to\Il_+^r,\quad\lb(t_{k+1},\;,t_r):=(1^k,t_{k+1},\;,t_r).$$
Then the $K$-invariant measure $M_{k,r}$ on $\lO_k$ has the radial part $\t M_{k,r}=\lb_*\t M_{\ln_k}^{c_k},$ where 
\be{10}\t M_{\ln_k}^{c_k}(dt_{k+1},\;,dt_r)=\x{const.}\ \P_{i=k+1}^r t_i^b(1-t_i)^{\ln_k-p_k}\ dt_i\ \P_{k<i<j\le r}(t_i-t_j)^a\ee
is the radial part, relative to the Peirce 0-space $V^{c_k}$ of rank $r-k,$ of the weighted Bergman measure $M_{\ln_k}^{c_k}$ for parameter $\ln_k.$ Thus
$$\I_{\lO_{k,r}}M_{k,r}(dz)\ f(z)=\I_{\Il_+^r}\t M_{k,r}(dt_1,\;,dt_r)\ f^\nl(\F{t_1},\;,\F{t_r})
=\I_{\Il_+^r}(\lb_*\t M_{\ln_k}^{c_k})(dt_1,\;,dt_r)\ f^\nl(\F{t_1},\;,\F{t_r})$$
$$=\I_{\Il_+^{r-k}}\t M_{\ln_k}^{c_k}(dt_{k+1},\;,dt_r)\ f^\nl(1^k,\F{t_{k+1}},\;,\F{t_r})$$
$$=\x{const.}\ \I_{\Il_+^{r-k}}\P_{i=k+1}^r t_i^b\ (1-t_i)^{\ln_k-p_k}\ dt_i\ \P_{k<i<j\le r}(t_i-t_j)^a\ f^\nl(1^k,\F{t_{k+1}},\;,\F{t_r}).$$
\end{proposition}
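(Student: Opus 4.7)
The plan is to reduce the radial computation to applying the radial decomposition \er{4} on the Peirce $0$-space $V^{c_k}$, viewed as an irreducible hermitian Jordan triple of rank $r-k$ in its own right.

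First I would unfold the defining formula \er{8},
$$\I_{\lO_{k,r}}M_{k,r}(dz)\ f(z)=\x{const.}\ \I_{S_k}dc\I_{\lO^c}d\lz\ \lD(\lz,\lz)^{\ln_k-p_k}\ f(c+\lz),$$
and exploit $K$-transitivity on $S_k$. After replacing $f$ by its $K$-average $f^\nl,$ the inner integral depends only on the $K$-orbit of $c,$ so the outer $dc$ integration contributes only a finite normalizing constant and one may fix $c=c_k=e_1+\;+e_k.$ If $\lz\in\lO^{c_k}$ has spectral decomposition $\lz=\S_{i=k+1}^r s_i c_i'$ relative to a frame of $V^{c_k},$ then $c_k+\lz$ has singular values $(1^k,s_{k+1},\;,s_r),$ which already accounts for the claimed pushforward structure $\t M_{k,r}=\lb_*\t M_{\ln_k}^{c_k}.$

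Next I would invoke that $V^{c_k}$ is itself an irreducible hermitian Jordan triple of rank $r-k$ with the \emph{same} characteristic multiplicities $(a,b)$ as $V$ (standard Peirce theory, see \cite{Lo1,FK2}), so that \er{4} applied to the Lebesgue measure of $V^{c_k}$ gives
$$\I_{V^{c_k}}d\lz\ g(\lz)=\x{const.}\ \I_{\Rl_{++}^{r-k}}\P_{i=k+1}^r dt_i\ t_i^b\ \P_{k<i<j\le r}(t_i-t_j)^a\ g^\nl(\F{t_{k+1}},\;,\F{t_r})$$
with $t_i=s_i^2.$ Since $\lD(\lz,\lz)=\P_{i=k+1}^r(1-s_i^2)=\P_{i=k+1}^r(1-t_i)$ on $V^{c_k},$ applying this to $g(\lz):=\lD(\lz,\lz)^{\ln_k-p_k}f(c_k+\lz)$ produces exactly the density \er{10}, supported on $\Il_+^{r-k}$ because $\lz\in\lO^{c_k}$ forces $s_{k+1}<1.$ Recognizing $g^\nl(\F{t_{k+1}},\;,\F{t_r})=f^\nl(1^k,\F{t_{k+1}},\;,\F{t_r})$ then produces the chain of equalities in the statement.

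The main obstacle is essentially bookkeeping across the two layers of $K$-averaging: one must verify that a $K$-invariant function on $V$ restricts to an $L^{c_k}$-invariant function on $V^{c_k},$ so that \er{4} applies with $g^\nl=g.$ This follows because the $K$-orbit of $c_k+\lz$ is determined by its joint singular values $(1^k,s_{k+1},\;,s_r),$ and the stabilizer of $c_k$ in $K$ induces the full triple automorphism group of $V^{c_k}$ at the level of singular values. The unspecified overall constant absorbs the volume of $S_k$ together with the ratio of Lebesgue normalizations on $V$ and $V^{c_k}.$
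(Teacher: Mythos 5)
Your argument is correct and is exactly the route the paper intends: the paper states the proposition as an immediate consequence of the explicit realization \er{8}, and your write-up supplies the missing details in the natural way (reduce to $c=c_k$ by $K$-transitivity on $S_k$ after $K$-averaging $f$, then apply the rank-$(r-k)$ polar decomposition \er{4} on the Peirce $0$-space $V^{c_k}$, using $\lD(\lz,\lz)=\prod_{i>k}(1-t_i)$ and the fact that the singular values of $c_k+\lz$ are $(1^k,s_{k+1},\ldots,s_r)$). Your attention to the point that $f^\nl$ restricts to an $\mathrm{Aut}(V^{c_k})$-invariant function on $V^{c_k}$ is the one detail the paper leaves implicit, and you justify it correctly.
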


Now let $\l\le r.$ For $\ln>p-1$ define the probability measure
\be{11}M_{\ln,\l}(dz):=\x{const.}\ \lD(z,z)^{\ln-p}\ \ll_\l(dz)\ee
on the Kepler ball $\lO_\l.$ For $\l=r$ we have $\lO_r=\lO$ and recover the ``full" measure $M_{\ln,r}=M_\ln.$ Finally, combining boundary orbits and Kepler varieties, we define the probability measure
\be{12}\I_{\lO_{k,\l}}M_{k,\l}(dz)\ f(z)=\I_{S_k}dc\I_{\lO_{\l-k}^c}M_{\ln_k,\l-k}^c(d\lz)\ f(c+\lz)
=\x{const.}\I_{S_k}dc\I_{\lO_{\l-k}^c}\ll_{\l-k}^c(d\lz)\ \lD(\lz,\lz)^{\ln_k-p_k}\ f(c+\lz)\ee
on $\lO_{k,\l},$ written in terms of the fibration \er{9}. Here $\ll_{\l-k}^c$ is the Riemann measure on the 'little' Kepler ball 
$\lO_{\l-k}^c=\lO^c\ui V_{\l-k}$ induced by the hermitian metric $(z|w)$ restricted to $V^c.$ 

Consider the commuting diagram
$$\xymatrix{\Il_+^{\l-k}\ar[r]^{\lb'}\ar[d]_{\la'}\ar[dr]^{\lg}&\Il_+^\l\ar[d]^{\la}\\\Il_+^{r-k}\ar[r]_{\lb}&\Il_+^r}$$
where
$$\la'(t_{k+1},\;,t_\l):=(1^k,t_{k+1},\;,t_\l)$$
$$\lb'(t_{k+1},\;,t_\l):=(t_{k+1},\;,t_\l,0^{r-\l})$$
$$\lg(t_{k+1},\;,t_\l)=(1^k,t_{k+1},\;,t_\l,0^{r-\l}).$$

\begin{proposition} The $K$-invariant measure $M_{k,\l}$ on $\lO_{k,\l}$ has the radial part 
$\t M_{k,\l}=\lg_*\h M_{k,\l},$ for the measure
\be{13}\h M_{k,\l}(dt_{k+1},\;,dt_\l):=\x{const.}\P_{i=1}^\l\ dt_i\ (1-t_i)^{\ln_k-p_k}\ t_i^{d_1^c/\l}\ \P_{1\le i<j\le\l}(t_i-t_j)^a\ee
on $\Il_+^{\l-k}.$ Thus
$$\I_{\lO_{k,\l}}M_{k,\l}(dz)\ f(z)=\I_{\Il_+^r}\t M_{k,\l}(dt)\ f^\nl(\F t)=\I_{\Il_+^r}(\lg_*\h M_{k,\l})(dt)\ f^\nl(\F t)$$
$$=\I_{\Il_+^{\l-k}}\h M_{k,\l}(dt_{k+1},\;,dt_\l)\ f^\nl(1^k,\F{t_{k+1}},\;,\F{t_\l},0^{r-\l})$$
$$=\x{const.}\I_{\Il_+^{\l-k}}\P_{i=k+1}^\l\ dt_i\ (1-t_i)^{\ln_k-p_k}\ t_i^{d_1^c/\l}\ \P_{k<i<j\le\l}(t_i-t_j)^a
\ f^\nl(1^k,\F{t_{k+1}},\;,\F{t_\l},0^{r-\l})$$
\end{proposition}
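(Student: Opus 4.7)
The plan is to test both sides against an arbitrary $K$-invariant function $f(z)=f^\nl(\ll(z))$ using the defining realization \er{12} together with the Kepler-variety polar decomposition \er{5} applied inside the Peirce $0$-subtriple $V^c$.

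First I would fix $c\in S_k$ and choose a minimal frame $(e_1,\ldots,e_r)$ of $V$ with $c=e_1+\ldots+e_k$. Then $(e_{k+1},\ldots,e_r)$ is a frame of the Peirce $0$-space $V^c$, which is an irreducible hermitian Jordan triple of rank $r-k$ with the same characteristic multiplicities $a,b$ as $V$. For $\lz\in\lO_{\l-k}^c$ expand $\lz=\S_{i=k+1}^\l\lm_i e_i$ in this frame; the spectral decomposition of $c+\lz$ in the same frame yields singular values $(1^k,\lm_{k+1},\ldots,\lm_\l,0^{r-\l})$, so that $f(c+\lz)=f^\nl(1^k,\lm_{k+1},\ldots,\lm_\l,0^{r-\l})$. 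Since $\lz\in V_0^c$, the restriction property for the Jordan-triple determinant under Peirce decomposition yields $\lD(\lz,\lz)=\P_{i=k+1}^\l(1-\lm_i^2)$.

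Next I would apply the Kepler-variety polar decomposition \er{5} to $V^c$ at Kepler level $\l-k$. Since the rank of $V^c$ is $r-k$ and its characteristic multiplicities coincide with those of $V,$ the relevant exponent becomes $b+a((r-k)-(\l-k))=b+a(r-\l)=d_1^c/\l$. Substituting $t_i=\lm_i^2$ and incorporating the density $\lD(\lz,\lz)^{\ln_k-p_k}=\P_{i=k+1}^\l(1-t_i)^{\ln_k-p_k}$ transforms the inner integral in \er{12} into
$$\x{const.}\I_{\Il_+^{\l-k}}\P_{i=k+1}^\l dt_i\ (1-t_i)^{\ln_k-p_k}\ t_i^{d_1^c/\l}\P_{k<i<j\le\l}(t_i-t_j)^a\ f^\nl(1^k,\F{t_{k+1}},\ldots,\F{t_\l},0^{r-\l}).$$
This expression does not depend on the particular choice of $c\in S_k$ (any two are $K$-conjugate and the remaining integrand is $K$-invariant), so the outer integration $\I_{S_k}dc$ contributes only the total mass of the $K$-invariant probability measure on $S_k,$ which is absorbed into the overall constant. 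Comparing with $\I M_{k,\l}(dz)\,f(z)=\I\t M_{k,\l}(dt)\,f^\nl(\F t),$ and noting that the argument of $f^\nl$ displayed above is componentwise $\lg(\F{t_{k+1}},\ldots,\F{t_\l}),$ one reads off $\t M_{k,\l}=\lg_*\h M_{k,\l}$ with $\h M_{k,\l}$ as in \er{13}.

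The main obstacle is the careful alignment of Jordan-theoretic data between $V$ and its Peirce $0$-subtriple $V^c$: one must use that $V^c$ is an irreducible hermitian Jordan triple of rank $r-k$ with the same characteristic multiplicities $(a,b),$ that any frame of $V^c$ concatenates with the minimal tripotents summing to $c$ to form a frame of $V$ (so the singular values of $c+\lz$ are obtained by juxtaposition), and that the Jordan-triple determinant $\lD$ restricts multiplicatively under Peirce decomposition so that $\lD_V|_{V_0^c\xx V_0^c}=\lD_{V^c}$. With these standard Peirce-theoretic facts in place, the rank $r-k$ version of the Kepler-variety polar decomposition combined with the density factor $\lD^{\ln_k-p_k}$ delivers \er{13} immediately.
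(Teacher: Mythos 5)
Your argument is correct and is precisely the computation the paper leaves implicit: it states this proposition without proof as an immediate consequence of the defining formula \er{12} together with the polar decomposition \er{5} applied to the Peirce $0$-triple $V^c$ (rank $r-k$, same multiplicities $a,b$, hence exponent $b+a(r-\l)=d_1^c/\l$), exactly as you do. The only cosmetic caveat is that a general $\lz\in\lO_{\l-k}^c$ is expanded in a frame adapted to $\lz$ rather than the fixed frame $(e_{k+1},\;,e_r)$, but since you test only against $K$-invariant $f$ this is harmless and your reduction of the outer $S_k$-integral to a constant is valid.
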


Consider the Fischer-Fock kernel $E^\m(z,w)=E_w^\m(z)$ of $\PL_\m(V).$ Then
$$(E_z^\m|E_w^\m)_V=E^\m(z,w).$$
Define $d_\m=\dim\PL_\m(V).$

\begin{lemma} For all $t\in\lD^r$ and $w\in V$ we have
$$(|E_w^\m|^2)^\nl(\F t)=\f{E^\m(w,w)}{d_\m}\  E_e^\m(t).$$ 
\end{lemma}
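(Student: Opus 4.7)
This identity is essentially Schur orthogonality applied to the irreducible $K$-module $\PL_\m(V),$ combined with a Jordan-algebraic reduction. First, the natural $K$-action $(k\.p)(x)=p(k^{-1}x)$ is unitary on $\PL_\m(V)$ for the Fischer-Fock inner product, and $K$-invariance of the reproducing kernel forces $k\.E_w^\m=E_{kw}^\m.$ Since $\PL_\m(V)$ is $K$-irreducible of dimension $d_\m,$ Schur orthogonality for its matrix coefficients gives
$$\I_K\bigl|(k\.E_w^\m\,|\,E_z^\m)_V\bigr|^2\,dk=\f{\|E_w^\m\|_V^2\,\|E_z^\m\|_V^2}{d_\m}=\f{E^\m(w,w)\,E^\m(z,z)}{d_\m}.$$
Since $(k\.E_w^\m\,|\,E_z^\m)_V=E^\m_{kw}(z)=E^\m(z,kw),$ replacing $k$ by $k^{-1}$ and invoking $K$-invariance of $E^\m$ a second time rewrites the integrand as $|E_w^\m(kz)|^2,$ yielding
$$(|E_w^\m|^2)^\nl(z)=\f{E^\m(w,w)\,E^\m(z,z)}{d_\m}.$$
Specialising at $z=\F t:=\S_i\F{t_i}\,e_i$ reduces the lemma to the identity $E^\m(\F t,\F t)=E_e^\m(t).$

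To establish this last identity, I work inside the tube subtriple $V_2^e$ with unit $e=\S_i e_i.$ Mutual orthogonality of the minimal tripotents $e_i$ gives $\F t\.(\F t)^*=\S_i t_i\,e_i=t$ and $t\.e^*=t,$ hence $\lD(\F t,\F t)=\lD(t,e).$ Applying the Faraut-Kor\'anyi binomial expansion $\lD(z,w)^{-\ln}=\S_\m(\ln)_\m\,E^\m(z,w)$ from \er{7} to both pairs gives
$$\S_\m (\ln)_\m\,E^\m(\F t,\F t)=\S_\m (\ln)_\m\,E^\m(t,e)\qquad\mbox{for all }\ln>p-1.$$
Matching the uniquely determined $K$-invariant bihomogeneous components of bidegree $(|\m|,|\m|)$ forces $E^\m(\F t,\F t)=E^\m(t,e)=E_e^\m(t)$ for every partition $\m.$ The lemma thus holds for $t_i\ge 0,$ and extends to $t\in\lD^r$ by analytic continuation since both sides are polynomials in $t.$

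The most delicate point is this Jordan-algebraic identification: one must verify that $\lD$ on $V\xx V,$ when restricted to diagonal elements of the frame inside $V_2^e\xx V_2^e,$ reduces to the Jordan-algebra generic norm evaluated at $t.$ In the tube case this is immediate, but the general case requires the Peirce-decomposition analysis of $\lD$ via the Bergman operator.
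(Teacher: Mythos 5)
Your argument is essentially the paper's: the heart of both proofs is Schur orthogonality for the irreducible $K$-module $\PL_\m(V)$ applied to the coherent vectors $E_w^\m$ (the paper integrates over $k\F t$ where you integrate over $kw$, which is the same computation after the substitution $k\mapsto k^{-1}$), followed by the reduction $\|E_{\F t}^\m\|_V^2=E^\m(\F t,\F t)=E^\m(t,e)=E_e^\m(t).$ The only real difference is that you attempt to prove this last identity, which the paper simply asserts, and there your justification is imprecise: matching ``bihomogeneous components of bidegree $(|\m|,|\m|)$'' in the Faraut--Kor\'anyi expansion cannot isolate individual partitions, since distinct $\m$ with the same weight contribute to the same bidegree. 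This is fixable: either separate the degree-$n$ homogeneous part in $t$ and use linear independence of the polynomials $\ln\mapsto(\ln)_\m$ for $|\m|=n$, or, more directly, note that on diagonal elements $z=\S_j z_je_j,$ $w=\S_j w_je_j$ the $K$-invariant sesquiholomorphic kernel $E^\m$ is invariant under the torus $e_j\mapsto e^{i\theta_j}e_j$ in $K$ and hence is a polynomial in the products $z_j\o{w_j}$, which equal $t_j$ both for $(z,w)=(\F t,\F t)$ and for $(z,w)=(t,e).$
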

\begin{proof} Schur orthogonality implies
$$(|E_w^\m|^2)^\nl(\F t)=\I_K dk\ |E^\m(k\F t,w)|^2=\I_K dk\ |(E_{k\F t}^\m|E_w^\m)_V|^2
=\I_K dk\ |(k\.E_{\F t}^\m|E_w^\m)_V|^2=\f{\|E_w^\m\|_V^2\ \|E_{\F t}^\m\|_V^2}{d_\m}$$
Since $\|E_w^\m\|_V^2=E^\m(w,w)$ and $\|E_{\F t}^\m\|_V^2=E^\m(\F t,\F t)=E^\m(t,e),$ the assertion follows.
\end{proof}

\begin{proposition} 
\be{16}\I_{\lD^{r-k}}\P_{i=k+1}^r t_i^b(1-t_i)^{\ln_k-p_k}\ dt_i\ \P_{k<i<j\le r}(t_i-t_j)^a\ E_e^\m(1^k,t_{k+1},\;,t_r)
=\f{d_\m}{(\ln_k)_\m}.\ee
\end{proposition}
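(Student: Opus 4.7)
The plan is to deduce the identity by evaluating the inner product $(E_w^\m|E_w^\m)_{M_{k,r}}$ in two different ways, using that $M_{k,r}$ is hypergeometric of type $\binom{\ln_k}{}$.

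First, I would use the hypergeometric property \er{6} of $M_{k,r}$ from Proposition \ref{y}. Applied to $p=q=E_w^\m\in\PL_\m(V)$ and combined with the reproducing property $(E_w^\m|E_w^\m)_V=E^\m(w,w),$ this gives
$$\I_{\lO_{k,r}}M_{k,r}(dz)\ |E_w^\m(z)|^2=\f{E^\m(w,w)}{(\ln_k)_\m}.$$

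Next, I would compute the same integral via the polar decomposition of $M_{k,r}.$ Since $M_{k,r}$ is $K$-invariant, replacing the integrand by its $K$-average and applying the preceding lemma yields
$$\I_{\lO_{k,r}}M_{k,r}(dz)\ |E_w^\m(z)|^2=\I_{\Il_+^r}\t M_{k,r}(dt)\ (|E_w^\m|^2)^\nl(\F{t_1},\;,\F{t_r})=\f{E^\m(w,w)}{d_\m}\I_{\Il_+^r}\t M_{k,r}(dt)\ E_e^\m(t).$$
Using $\t M_{k,r}=\lb_*\t M_{\ln_k}^{c_k}$ and the explicit radial part \er{10}, the right-hand side becomes
$$\f{E^\m(w,w)}{d_\m}\ \x{const.}\I_{\Il_+^{r-k}}\P_{i=k+1}^r t_i^b(1-t_i)^{\ln_k-p_k}\ dt_i\ \P_{k<i<j\le r}(t_i-t_j)^a\ E_e^\m(1^k,t_{k+1},\;,t_r).$$

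Finally, I would equate the two expressions and cancel $E^\m(w,w),$ which is legitimate since one may choose $w$ with $E^\m(w,w)\ne 0$ (for instance $w$ a generic point of rank $r$). This yields exactly \er{16}, with the overall constant fixed by the normalization convention (both sides must agree, so the constant in \er{10} is determined by the fact that $M_{k,r}$ is a probability measure, i.e.\ the case $\m=0$ of \er{16} gives $d_0/(\ln_k)_0=1$).

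The only subtle point is bookkeeping of the constant in \er{10}: one must check that the normalization implicit in Proposition \ref{y} and the lemma is consistent so that no spurious factor appears. Since the formula for $\m=0$ reduces to $\int_{\Il_+^{r-k}}\t M_{\ln_k}^{c_k}=1,$ which is precisely the statement that $M_{k,r}$ is a probability measure, this consistency check is automatic, and no further obstacle arises.
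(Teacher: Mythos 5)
Your proposal is correct and follows essentially the same route as the paper: both arguments combine the Lemma computing the $K$-average $(|E_w^\m|^2)^\nl(\F t)$, the radial part \er{10}, and the hypergeometric property of $M_{k,r}$ from Proposition \ref{y}, the only difference being that the paper takes $w=e$ and writes everything as a single chain of equalities rather than equating two evaluations of the same integral. Your closing remark that the normalizing constant in \er{10} is pinned down by the case $\m=0$ is exactly the right reading of the paper's implicit convention in stating \er{16} without a constant.
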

\begin{proof} From \er{10} it follows that
$$\I_{\Il_+^{r-k}}\P_{i=k+1}^r t_i^b(1-t_i)^{\ln_k-p_k}\ dt_i\ \P_{k<i<j\le r}(t_i-t_j)^a\ E_e^\m(1^k,t_{k+1},\;,t_r)
=\I_{\Il_+^r}\t M_{k,r}(dt)\ E^\m(t,e)$$
$$=\f{d_\m}{E^\m(e,e)}\I_{\Il_+^r}\t M_{k,r}(dt)\ (|E_e^\m|^2)^\nl(\F t)
=\f{d_\m}{E^\m(e,e)}\I_{\lO_k}M_{k,r}(dz)\ |E_e^\m(z)|^2=\f{d_\m}{\|E_e^\m\|_V^2}\ \|E_e^\m\|_{\ln_k}^2=\f{d_\m}{(\ln_k)_\m}.$$
\end{proof}

\begin{remark} In the special case $V=\Cl^{r\xx s}$ the polynomials $E_e^\m$ are proportional to the Schur polynomials, and the identity \er{16} was shown directly in \cite{BM1}. A direct proof of \er{16} in the general case would be of interest.
\end{remark}

The following theorem is our first main result.

\begin{theorem}\label{t} For $1\le k\le\l\le r$ the probability measure $M_{k,\l}$ on $\lO_{k,\l}$ is $\l$-hypergeometric of type
$\binom{\f dr,\ r\f a2,\ \ln_k}{\l\f a2,\ \ln_\l}.$
\end{theorem}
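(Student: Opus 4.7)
The plan is to reduce the theorem to a single scalar identity per partition $\m\in\Nl_+^\l$ and then evaluate a Selberg-type integral by combining two previously established cases.

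First, by $K$-invariance of $M_{k,\l}$ together with Schur's lemma applied to the $K$-irreducible module $\PL_\m(V),$ we have $(p|q)_{M_{k,\l}}=c_\m(p|q)_V$ on $\PL_\m(V)$ for some scalar $c_\m>0,$ so the theorem reduces to checking
$$c_\m=\f{(\l\f a2)_\m\,(\ln_\l)_\m}{(\f dr)_\m\,(r\f a2)_\m\,(\ln_k)_\m}.$$
To extract $c_\m,$ apply the radial decomposition of $M_{k,\l}$ from the preceding proposition to the reference polynomial $E_e^\m,$ together with the lemma giving $(|E_e^\m|^2)^\nl(\F T)=(E^\m(e,e)/d_\m)\,E_e^\m(T).$ This reduces the problem to evaluating a Selberg-type integral of the Jack-like polynomial $E_e^\m$ at the partially-boundary point $(1^k,t_{k+1},\;,t_\l,0^{r-\l})$ against the weight $\P_{i}(1-t_i)^{\ln_k-p_k}\,t_i^{b+a(r-\l)}\,\P_{i<j}(t_i-t_j)^a.$

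To evaluate this integral, exploit the measure factorisation \er{12}: $M_{k,\l}$ is a fibre product with base $S_k$ carrying its $K$-invariant probability measure, and fibre the ``little Kepler ball'' measure $M^c_{\ln_k,\l-k}$ on $\lO^c_{\l-k}$ inside the smaller Jordan triple $V^c$ of rank $r-k.$ Apply the $k=0$ case of the theorem (the Kepler variety result of \cite{U5}, valid for any admissible Wallach parameter) to the smaller triple $V^c$ with parameter $\ln_k$: the inner integral contributes a Pochhammer ratio for the partition $\m^c$ induced on $V^c$ by the Peirce $0$-projection, involving $d^c/(r-k),\ (r-k)\f a2,\ (\l-k)\f a2,\ \ln_k$ and $\ln^c_{\l-k}$ (the analogue of $\ln_\l$ for the smaller triple). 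The outer $S_k$-integration of the residual factor, which depends only on the first $k$ parts of $\m,$ yields the remaining Pochhammer factors via an application of Proposition \ref{y} to the rank-$k$ boundary.

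To finish, combine the two contributions using the three arithmetic identities
$$d/r-d^c/(r-k)=k\f a2,\qquad r\f a2-(r-k)\f a2=k\f a2,\qquad \ln_\l-\ln^c_{\l-k}=k\f a2,$$
all immediate from the definitions of $d^c$ and $\ln_i.$ Together with the shift identity \er{18}, these let one split each of $(\f dr)_\m,\ (r\f a2)_\m$ and $(\ln_\l)_\m$ as a product of a factor depending only on $(m_1,\;,m_k)$ (matching the $S_k$ contribution) and a Pochhammer symbol in the restricted partition $\m^c$ with appropriately shifted base (matching the smaller-triple contribution); the whole expression reassembles into the claimed formula, with the overall constant fixed by requiring $M_{k,\l}$ to be a probability measure. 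The main obstacle is the combinatorial identity for $E_e^\m(1^k,\underline t,0^{r-\l})$ in terms of smaller-triple Jack polynomials, equivalently the precise decomposition of $E^\m$ under the Peirce $0$-projection $V\ti V^c,$ which controls how $\m^c$ is related to $\m$ and how the first-$k$-parts factor is produced; this generalises Proposition \ref{y}'s identity \er{16} from $\l=r$ to arbitrary $\l\le r.$
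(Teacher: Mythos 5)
Your overall strategy is genuinely different from the paper's, and the difference matters. You propose to split $M_{k,\l}$ along the fibration \er{12} over $S_k$, evaluate the fibre integral over the little Kepler ball $\lO^c_{\l-k}\ic V^c$ by the $k=0$ case from \cite{U5}, and then integrate over $S_k$. The paper instead reduces everything to the Peirce $2$-space $V_2^{c}$ of a \emph{rank-$\l$} tripotent $c=c_\l$ (a tube-type triple of rank $\l$), where the boundary identity \er{16} is already available, and absorbs the extra radial weight $t_i^{h}$ with $h=b+a(r-\l)$ coming from the Riemann measure of the Kepler variety by the partition shift $\m\mapsto\m+h$, via $N_c(z)^h\,E_c^\m(z)=\f{E^\m(c,c)}{E^{\m+h}(c,c)}E_c^{\m+h}(z)$; the passage back to the full triple $V$ then uses the dimension ratio $d^c_\m/d_\m$ from \cite{EU}. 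Crucially, in the paper's route every integrand stays inside a single $K$-isotype of the rank-$\l$ tube domain, so no branching problem ever arises.

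Your route, by contrast, runs into exactly the difficulty the paper itself singles out in Section 5: the assignment $f^{(c)}(\lz)=f(c+\lz)$ is \emph{not} compatible with the Peter--Weyl decomposition. For $p\in\PL_\m(V)$ and $c\in S_k$, the restriction $\lz\mapsto p(c+\lz)$ on $V^c$ is a sum over many $K^c$-types (a Gelfand--Tsetlin-type branching), not a single polynomial of one ``induced type $\m^c$''. So the inner fibre integral is not a single Pochhammer ratio; it is a weighted sum over all branching components, and the outer $S_k$-integration must then be shown to recombine these into one ratio. This is the identity you flag as ``the main obstacle'', but flagging it does not discharge it: it is essentially the content of \er{16}, which even in the case $\l=r$ is known only through Schur-polynomial combinatorics for $a=2$ (\cite{BM1}) or the non-elementary boundary-group arguments of \cite{AU} --- and the paper explicitly remarks that a direct proof of \er{16} in general ``would be of interest''. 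Your three arithmetic identities for the parameters are correct but peripheral; without the branching identity the argument does not close. To salvage the fibration approach you would need the evaluation of $\I_{S_k}dc\ \|p_\la^{(c)}\|^2_{V^c}$ for each branching component $\la$, which amounts to computing generalized binomial coefficients --- precisely what the paper's Peirce-$2$ reduction is designed to avoid.
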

\begin{proof} Let $c=c_\l.$ Put $h:=d_1^c/\l=b+a(r-\l).$ Applying \er{16} to the Jordan triple $V_2^c$ (of tube type) we obtain for 
$\m\in\Nl_+^\l,$ putting $d_\m^c=\dim\PL_\m(V_2^c),$
$$\x{const.}\I_{\lD^{\l-k}}\P_{i=k+1}^\l(1-t_i)^{\ln_k-p_k}\ dt_i\ \P_{k<i<j\le\l}(t_i-t_j)^a\ E_{c_\l}^\m(1^k,t_{k+1},\;,t_\l)
=\f{d_\m^c}{(1+\f a2(2\l-k-1))_\m}=\f{d_\m^c}{(\ln_k-h)_\m}$$
since
$$1+\f a2(2\l-k-1)+h=1+\f a2(2\l-k-1)+b+a(r-\l)=1+b+\f a2(2\r-k-1)=\ln_k$$
For $z\in V_2^c$ we have $E_c^\m(z)=E^\m(c,c)\ \lF_\m^c(z),$ where $\lF_\m^c\in\PL_\m(V_2^c)$ is the spherical polynomial normalized by 
$\lF_\m^c(c)=1.$ Therefore
$$N_c(z)^h\ E_c^\m(z)=E^\m(c,c)\ N_c(z)^h\ \lF_\m^c(z)=E^\m(c,c)\ \lF_{\m+h}^c(z)=\f{E^\m(c,c)}{E^{\m+h}(c,c)}\ E_c^{\m+h}(z).$$
We have
$$E^\m(c,c)=\f{d_\m^c}{(1+\f a2(\l-1))_\m}$$
and, similarly,
$$E^{\m+h}(c,c)=\f{d_{\m+h}^c}{(1+\f a2(\l-1))_{\m+h}}=\f{d_\m^c}{(\ln_\l-h)_{\m+h}},$$
since
$$1+\f a2(\l-1)+h=1+\f a2(\l-1)+b+a(r-\l)=1+b+\f a2(2\r-l-1)=\ln_\l.$$
It follows that
$$\P_{i=k+1}^\l t_i^h\ (|E_c^\m|^2)^\nl(1^k,\F{t_{k+1}},\;,\F{t_\l},0^{r-\l})
=\f{E^\m(c,c)}{d_\m}N_c(1^k,t_{k+1},\;,t_\l)^h\ E_c^\m(1^k,t_{k+1},\;,t_\l)$$
$$=\f{E^\m(c,c)}{d_\m}\ \f{E^\m(c,c)}{E^{\m+h}(c,c)}\ E_c^{\m+h}(1^k,t_{k+1},\;,t_\l)
=\f{E^\m(c,c)}{d_\m}\ \f{(\ln_\l-h)_{\m+h}}{(\ln_\l-h)_\m}\ E_c^{\m+h}(1^k,t_{k+1},\;,t_\l).$$
Applying \er{16} to $\m+h\in\Nl_+^\l$ we obtain
$$\f1{\x{const.}}\|E_c^\m\|_{\ln_k,\l}^2=\f1{\x{const.}}\I_{\lO_{k,\l}}M_{k,\l}(dz)\ |E_c^\m(z)|^2$$
$$=\I_{\Il_+^{\l-k}}\P_{i=k+1}^\l t_i^h(1-t_i)^{\ln_k-p_k}\ dt_i\ \P_{k<i<j\le\l}(t_i-t_j)^a
\ (|E_c^\m|^2)^\nl(1^k,\F{t_{k+1}},\;,\F{t_\l},0^{r-\l})$$
$$=\f{E^\m(c,c)}{d_\m}\ \f{(\ln_\l-h)_{\m+h}}{(\ln_\l-h)_\m}
\I_{\lD^{\l-k}}\P_{i=k+1}^\l(1-t_i)^{\ln_k-p_k}\ dt_i\ \P_{k<i<j\le\l}(t_i-t_j)^a\ E_c^{\m+h}(1^k,t_{k+1},\;,t_\l)$$
$$=\f{E^\m(c,c)}{d_\m}\ \f{(\ln_\l-h)_{\m+h}}{(\ln_\l-h)_\m}\ \f{d_{\m+h}^c}{(\ln_k-h)_{\m+h}}
=\f{E^\m(c,c)}{(\ln_k-h)_{\m+h}}\ \f{(\ln_\l-h)_{\m+h}}{(\ln_\l-h)_\m}\ \f{(a\l/2)_\m}{(ar/2)_\m}\ \f{(\ln_\l-h)_\m}{(d/r)_\m}$$
using the identity
$$\f{d_{\m+h}^c}{d_\m}=\f{d_\m^c}{d_\m}
=\f{(a\l/2)_\m}{(ar/2)_\m}\ \f{(1+\f a2(\l-1))_\m}{(d/r)_\m}=\f{(a\l/2)_\m}{(ar/2)_\m}\ \f{(\ln_\l-h)_\m}{(d/r)_\m}$$
as computed in the proof of \cite[Theorem 5.1]{EU}. Simplifying and using \er{18} we finally obtain
$$\|E_c^\m\|_{k,\l}^2=E^\m(c,c)\ \f{(\ln_\l)_\m}{(\ln_k)_\m\ (d/r)_\m}\ \f{(a\l/2)_\m}{(ar/2)_\m}$$
since $M_{k,\l}$ is a probability measure. It follows that for $\m\in\Nl_+^\l$ and $p,q\in\PL_\m(V)$ we have
$$(p|q)_{k,\l}:=\I_{\lO_{k,\l}}M_{k,\l}(dz)\ \o{p(z)}\ q(z)=(p|q)_V\ \f{(\ln_\l)_\m}{(\ln_k)_\m\ (d/r)_\m}\ \f{(a\l/2)_\m}{(ar/2)_\m}.$$
\end{proof}

\section{Holomorphic Function Spaces and Toeplitz Operators}
We now define Hilbert spaces of holomorphic functions and Toeplitz type operators associated with hypergeometric measures of rank 
$\l\le r,$ keeping in mind the examples $M_{k,\l}$ on $\o\lO_{k,\l}$ constructed above. For $\l\le r$ define
$$\PL^\l(V)=\S_{\m\in\Nl_+^\l}\PL_\m(V),$$
involving only partitions of length $\le\l.$ Then the restriction map $p\mapsto p|_{V_\l}$ is injective and yields a linear isomorphism between $\PL^\l(V)$ and the regular functions on the Kepler variety $V_\l.$ For a $K$-invariant $\l$-hypergeometric measure $\lm$ on 
$\o\lO_{k,\l}$ let $\HL_{\lm,\l}$ denote the Hilbert space of all holomorphic functions on the Kepler ball $\lO_\l$ which are square-integrable under the measure $\lm.$ This is the completion of $\PL^\l(V),$ restricted to $\lO_\l,$ for the measure $\lm.$

This general definition covers all classical examples. Consider first the ``full" case $\l=r.$ For a discrete series Wallach parameter $\ln>p-1,$ the {\bf weighted Bergman space} $\HL_\ln$ consists of all holomorphic functions on $\lO$ which are square-integrable under the measure $M_\ln.$ For $1\le k\le r$ the {\bf embedded Wallach parameters} $\ln_k$ defined in \er{15} belong to the continuous Wallach set 
\be{14}\ln>\f a2(r-1)\ee 
but not to the discrete series since $k\ge 1$ implies $\ln_k\le 1+b+\f a2(2r-2)=p-1.$ The associated {\bf Hardy type spaces} 
$\HL_{k,r}$ consist of all holomorphic functions on $\lO$ which are square-integrable under the measure $M_{k,r}.$ Then $\ln_r=\f dr$ is the ``true" Hardy space parameter, corresponding to the Shilov boundary $S=\lO_{r,r}.$ The left endpoint $\ln_1=p-1$ of the holomorphic discrete series corresponds to the probability measure $M_{1,r}$ on the dense open boundary orbit $\lO_{1,r}.$ As explained in Section \ref{r}, the parameters $\ln_k$ are of special importance for subnormal $G$-homogeneous Toeplitz operators. By Proposition \ref{x} and Proposition \ref{y}, these measures are of hypergeometric type. 

Now consider the ``partial" case $\l\le r.$ If $\ln>p-1,$ the {\bf partial weighted Bergman space} $\HL_{\ln,\l}$ consists of all holomorphic functions on the Kepler ball $\lO_\l$ which are square-integrable for the probability measure $M_{\ln,\l}.$ The inner product is
$$(\lf|\lq)_{\ln,\l}:=\I_{\lO_\l}M_{\ln,\l}(dz)\ \o{\lf(z)}\ \lq(z)
=\x{const.}\I_{\lO_\l}\ll_\l(dz)\ \lD(z,z)^{\ln-p}\ \o{\lf(z)}\ \lq(z).$$
For $\l=r$ we have $\lO_r=\lO$ and $M_{\ln,r}=M_\ln.$ Thus we recover the 'full' weighted Bergman space $\HL_{\ln,r}=\HL_\ln.$  For 
$1\le k\le\l\le r,$ the {\bf partial Hardy type space} $\HL_{k,\l}$ consists of all holomorphic functions on the Kepler ball $\lO_\l$ which are square-integrable for the probability measure $M_{k,\l}.$ The inner product is
$$(\lf|\lq)_{k,\l}:=\I_{\lO_{k,\l}}M_{k,\l}(dz)\ \o{\lf(z)}\ \lq(z)
=\I_{S_k}dc\I_{\lO_{\l-k}^c}\ll_{\l-k}^c(d\lz)\ \lD(\lz,\lz)^{\ln_k-p_k}\ (\o\lf\lq)(c+\lz).$$
Putting $\l=r$ we recover the inner product \er{16} since $\lO_{r-k}^c=\lO^c$ and $M_{r-k}^c(d\lz)=d\lz$ is the Lebesgue measure on 
$V^c.$ For $k=0$ we have $c=0,\ V^0=V,\ \lO_\l^0=\lO_\l=\lO\ui V_\l,\ M_\l^0=M_\l$ and $p_0=p.$ Thus we recover the $M_\l$-inner product. 

In summary, we obtain examples of type $\binom{\f dr,\ r\f a2,\ \ln_k}{\l\f a2,\ \ln_\l}$ for $0\le k\le\l\le r.$ For fixed $\l$ we have as special cases the partial weighted Bergman spaces of type $\binom{\f dr,\ r\f a2,\ \ln}{\l\f a2,\ \ln_\l},$ corresponding to 
$k=0,$ and the partial Hardy space of type $\binom{\f dr,\ r\f a2}{\ln_\l}$ corresponding to maximal $k=\l.$ For $\l=r$ we obtain the full type $\binom{\ln_k}{},$ since $\ln_r=\f dr,$ specializing to the full weighted Bergman spaces of type $\binom{\ln}{}$ if 
$k=0$ and the full Hardy space of type $\binom{\f dr}{}$ if $k=r.$ It would be interesting to construct natural examples of more complicated hypergeometric type.  

We now introduce Toeplitz operators in our setting. For the 'full' Hilbert space $\HL_\lm$ over $\lO$ we denote by 
$P_\lm:L^2(\o\lO,\lm)\to\HL_\lm$ the orthogonal projection and define the ``full" Toeplitz operator $T_\lm(f),$ with symbol function 
$f\in L^\oo(\o\lO),$ by
$$T_\lm(f)=P_\lm\ f\ P_\lm.$$
Restricting to continuous symbols we obtain the ``full" Toeplitz $C^*$-algebra
$$\TL_\lm=C^*(T_\lm(f):\ f\in\CL(\o\lO)).$$
As special cases, we obtain the ``full" Bergman-Toeplitz operators $T_{\ln,r}(f)$ ($\ln>p-1$) and the ``full" Hardy type Toeplitz operators $T_{k,r}(f)$ ($1\le k\le r$) associated with the hypergeometric measures $M_{\ln,r}$ on $\lO$ and $M_{k,r}$ on $\lO_{k,r},$ respectively. The corresponding Toeplitz $C^*$-algebras are denoted by $\TL_{\ln,r}$ and $\TL_{k,r},$ respectively.  

In the more general setting of the ``partial" Hilbert space $\HL_{\lm,\l}$ over $\lO_\l,$ associated with a $K$-invariant $\l$-hypergeometric measure $\lm$ ($\l\le r$), denote by $P_{\lm,\l}:L^2(\o\lO_\l,\lm)\to\HL_{\lm,\l}$ the orthogonal projection and define the ``partial" Toeplitz operator $T_{\lm,\l}(f),$ with symbol function $f\in L^\oo(\o\lO_\l),$ by
$$T_{\lm,\l}(f)=P_{\lm,\l}\ f\ P_{\lm,\l}.$$
Restricting to continuous symbols we obtain the ``partial" Toeplitz $C^*$-algebra
$$\TL_{\lm,\l}=C^*(T_{\lm,\l}(f):\ f\in\CL(\o\lO_\l)).$$
As special cases, we obtain the ``partial" Bergman-Toeplitz operators 
$T_{\ln,\l}(f)$ ($\ln>p-1$) and the ``partial" Hardy type Toeplitz operators $T_{k,\l}(f)$ ($1\le k\le\l$) associated with the $\l$-hypergeometric measures $M_{\ln,\l}$ on $\lO_\l$ and $M_{k,\l}$ on $\lO_{k,\l},$ respectively. The corresponding Toeplitz $C^*$-algebras are denoted by $\TL_{\ln,\l}$ and $\TL_{k,\l},$ respectively.  

\begin{lemma} Let $p,q\in\PL(V).$ Then the Toeplitz type operators satisfy
$$T_{\lm,\l}(p)\ T_{\lm,\l}(q)=T_{\lm,\l}(pq).$$
\end{lemma}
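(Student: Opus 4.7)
The plan is to show that on $\HL_{\lm,\l}$ the Toeplitz operator $T_{\lm,\l}(q)$ with polynomial symbol coincides with the pointwise multiplication operator $M_q$; once this is in hand, the identity $T_{\lm,\l}(p)\,T_{\lm,\l}(q)=T_{\lm,\l}(pq)$ follows immediately from associativity of multiplication together with a second application of the same fact.

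The key step is to verify that multiplication by a polynomial preserves $\HL_{\lm,\l}.$ Since $\lm$ is supported on the compact set $\o\lO_\l,$ the polynomial $q$ is bounded on $\x{supp}(\lm),$ so $M_q$ is a bounded operator on $L^2(\o\lO_\l,\lm).$ For $\lq\in\HL_{\lm,\l}$ the product $q\lq$ is holomorphic on the regular stratum $\r\lO_\l$ as a product of holomorphic functions; combined with the density of $\PL^\l(V)$ in $\HL_{\lm,\l}$ and the fact that $\PL^\l(V),$ viewed via the restriction isomorphism as the algebra of regular functions on the Kepler variety $V_\l,$ is closed under multiplication by polynomial symbols, this shows $q\lq\in\HL_{\lm,\l}.$ Consequently $P_{\lm,\l}(q\lq)=q\lq,$ that is, $T_{\lm,\l}(q)=M_q$ on $\HL_{\lm,\l}.$

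Given this, for any $\lq\in\HL_{\lm,\l}$ I would compute
$$T_{\lm,\l}(p)\,T_{\lm,\l}(q)\lq=T_{\lm,\l}(p)(q\lq)=P_{\lm,\l}(p\.q\lq)=P_{\lm,\l}((pq)\lq)=(pq)\lq=T_{\lm,\l}(pq)\lq,$$
which is the desired identity. The mild subtlety worth stating carefully is the interpretation of ``holomorphic on $\lO_\l$'' over the singular Kepler variety, which is handled by passing through the restriction isomorphism between $\PL^\l(V)$ and the regular functions on $V_\l$; beyond this there is no serious obstacle, and the lemma is the standard manifestation of the fact that holomorphic function spaces are invariant under multiplication by polynomial symbols, so the interior projection is redundant.
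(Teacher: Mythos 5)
Your proof is correct, and it rests on the same structural fact as the paper's argument: the space $\PL^\l(V)$, identified via restriction with the regular functions on the Kepler variety $V_\l$, absorbs multiplication by arbitrary elements of $\PL(V)$, because every isotypic component $\PL_\m(V)$ with $\m$ of length $>\l$ vanishes identically on $V_\l\supseteq\x{supp}(\lm)$. The difference is one of strength. You establish that $q\lq\in\HL_{\lm,\l}$ for every $\lq\in\HL_{\lm,\l}$ --- via boundedness of $M_q$ on $L^2(\o\lO_\l,\lm)$, density of $\PL^\l(V)$, and invariance of $\PL^\l(V)$ under $M_q$ --- so that $T_{\lm,\l}(q)=M_q|_{\HL_{\lm,\l}}$ and the inner projection in $T_{\lm,\l}(p)T_{\lm,\l}(q)$ is literally redundant. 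The paper argues more economically: it only uses that $\PL^\l(V)^\perp$ is an \emph{ideal}, so that $P_{\lm,\l}\bigl(p\,P_{\lm,\l}^\perp(q\lq)\bigr)=0$, without needing $P_{\lm,\l}^\perp(q\lq)=0$ at all. Your stronger version buys the additional (and, for the subnormality theme of the paper, relevant) conclusion that Toeplitz operators with holomorphic polynomial symbol act as genuine multiplication operators; the remark about holomorphy on the regular stratum is dispensable, since the density-plus-continuity argument already carries the weight. Both routes are rigorous.
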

\begin{proof} Since $\PL^\l(V)^\perp$ is an ideal in $\PL(V)$ it follows that
$$T_{\lm,\l}(pq)\lf=P_{\lm,\l}(pq\lf)=P_{\lm,\l}(p(P_{\lm,\l}+P_{\lm,\l}^\perp)(q\lf))$$
$$=P_{\lm,\l}(p\ P_{\lm,\l}(q\lf))+P_{\lm,\l}(p\ P_{\lm,\l}^\perp(q\lf))=P_{\lm,\l}(p\ T_{\lm,\l}(q)\lf)
=T_{\lm,\l}(p)(T_{\lm,\l}(q)\lf).$$
\end{proof}

It follows that $\TL_{\lm,\l}$ is generated by Toeplitz type operators with linear symbols and their adjoints. 

\begin{remark}\label{f} A standard reproducing kernel argument (carried out in \cite[Proposition 4.2]{U5}) shows, at least for the 'concrete' hypergeometric measures described above (where the support is connected), that the $C^*$-algebra $\TL_{\lm,\l}$ acts {\bf irreducibly} on $\HL_{\lm,\l}.$
\end{remark}

For any $v\in V$ let
$$v^*(z):=(z|v)$$
denote the associated linear form. Its conjugate is $\o v^*(z)=\o{(z|v)}=(v|z).$ Let $\dl_v p(z):=p'(z)v$ denote the directional derivative. Put
$$\Le_j:=(0,\;,0,1,0,\;,0)$$
with $1$ at the $j$-th place. It is shown in \cite[Corollary 2.10]{U2} that
\be{22}v^*p\in\S_{j=1}^r\PL_{\m+\Le_j}(V),\quad\dl_v p\in\S_{j=1}^r\PL_{\m-\Le_j}(V)\ee
for all $p\in\PL_\m(V),$ with zero-component if $\m\pm\Le_j$ is not a partition. Let $q\mapsto q_\m\in\PL_\m(V)$ denote the $\m$-th isotypic projection.

The next result determines the fine structure of the adjoint Toeplitz type operator $T_{\lm,\l}(v^*)^*=T_{\lm,\l}(\o v^*).$ 

\begin{proposition}\label{e} Let $\lm$ be a $\l$-hypergeometric measure on $\o\lO_\l.$ Let $v\in V.$ Then
$$T_{\lm,\l}(\o v^*)p=\S_{j=1}^\l\ \f{\P_{i=1}^h(x_i-\f a2(j-1)+m_j-1)}{\P_{i=0}^h(y_i-\f a2(j-1)+m_j-1)}\ (\dl_v p)_{\m-\Le_j}$$
for all $\m\in\Nl_+^\l$ and $p\in\PL_\m(V).$
\end{proposition}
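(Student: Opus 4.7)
The plan is to identify $T_{\lm,\l}(\o v^*)p$ by testing its $\lm$-inner product against arbitrary $q\in\PL_\n(V)$ with $\n\in\Nl_+^\l,$ since such polynomials span a dense subspace of $\HL_{\lm,\l}.$ By the defining property of the Toeplitz operator,
\[
(T_{\lm,\l}(\o v^*)p|q)_\lm=(p|v^*q)_\lm.
\]
The expansion \er{22} gives $v^*q\in\S_{j=1}^r\PL_{\n+\Le_j}(V),$ and since any $K$-invariant inner product respects the Peter-Weyl decomposition, the right-hand side vanishes unless $\m=\n+\Le_j$ for some $j,$ equivalently $\n=\m-\Le_j.$ The constraint $\m\in\Nl_+^\l$ restricts $j$ to $1\le j\le\l,$ so $T_{\lm,\l}(\o v^*)p$ lies a priori in $\S_{j=1}^\l\PL_{\m-\Le_j}(V).$

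To pin down the coefficients I apply the hypergeometric identity \er{6} on both sides. Writing $T_{\lm,\l}(\o v^*)p=\S_{j=1}^\l c_j(\m)(\dl_v p)_{\m-\Le_j}$ with scalars $c_j(\m)$ to be determined and testing against $q\in\PL_{\m-\Le_k}$ yields
\[
(T_{\lm,\l}(\o v^*)p|q)_\lm=c_k(\m)\f{\P_{i=1}^h(x_i)_{\m-\Le_k}}{\P_{i=0}^h(y_i)_{\m-\Le_k}}((\dl_v p)_{\m-\Le_k}|q)_V.
\]
On the other hand, \er{6} gives
\[
(p|v^*q)_\lm=\f{\P_{i=1}^h(x_i)_\m}{\P_{i=0}^h(y_i)_\m}(p|v^*q)_V,
\]
and in the Fischer-Fock inner product on $\PL(V)$ multiplication by the linear form $v^*$ is adjoint to the directional derivative $\dl_v$ (a one-line check on monomials using $(v^*p|q)_V=(p|\dl_v q)_V$), so $(p|v^*q)_V=(\dl_v p|q)_V=((\dl_v p)_{\m-\Le_k}|q)_V$ by isotypic orthogonality and \er{22}.

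Equating the two expressions for $(T_{\lm,\l}(\o v^*)p|q)_\lm$ reduces the problem to the elementary Pochhammer identity
\[
\f{(x)_\m}{(x)_{\m-\Le_k}}=\f{(x-\f a2(k-1))_{m_k}}{(x-\f a2(k-1))_{m_k-1}}=x-\f a2(k-1)+m_k-1,
\]
applied to each of the $h$ numerator factors and $h+1$ denominator factors in $c_k(\m).$ This converts the ratio of hypergeometric prefactors into precisely the rational expression in the statement. The only subtlety, rather than an obstacle, is the bookkeeping for degenerate cases where $\m-\Le_j$ fails to be a partition: these are absorbed by the convention in \er{22} that the corresponding isotypic component of $\dl_v p$ is zero, ensuring that the sum $\S_{j=1}^\l$ correctly captures all nonzero contributions and that the hypergeometric identity \er{6} is only invoked on partitions in $\Nl_+^\l,$ exactly its domain of applicability.
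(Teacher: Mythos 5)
Your argument is correct and is essentially identical to the paper's proof: both test $T_{\lm,\l}(\o v^*)p$ against $q\in\PL_{\m-\Le_j}(V)$, use the Fischer--Fock adjointness $(p|v^*q)_V=(\dl_v p|q)_V$ together with the hypergeometric property \er{6} applied to $\m$ and $\m-\Le_j$, and conclude with the Pochhammer ratio $(\ll)_\m/(\ll)_{\m-\Le_j}=\ll-\f a2(j-1)+m_j-1$. No substantive differences.
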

\begin{proof} Let $q\in\PL_\n(V),\ \n\in\Nl_+^\l,$ satisfy $(T_{\lm,\l}(\o v^*)p|q)_{\lm,\l}\ne0.$ Then
$$(p|v^*q)_{\lm,\l}=(T_{\lm,\l}(\o v^*)p|q)_{\lm,\l}\ne0.$$
With \er{22} it follows that $\m=\n+\Le_j$ for some $j\le\l$ and hence $\n=\m-\Le_j.$ Since $\lm$ is $\l$-hypergeometric, it follows that
\be{25}(T_{\lm,\l}(\o v^*)p|q)_\lm=(p|v^*q)_\lm=\f{\P_{i=1}^h(x_i)_\m}{\P_{i=0}^h(y_i)_\m}\ (p|v^*q)_V
=\f{\P_{i=1}^h(x_i)_\m}{\P_{i=0}^h(y_i)_\m}\ (\dl_v p|q)_V
=\f{\P_{i=1}^h(x_i)_\m/(x_i)_{\m-\Le_j}}{\P_{i=0}^h(y_i)_\m/(y_i)_{\m-\Le_j}}\ (\dl_v p|q)_\lm.\ee
Since $q$ is arbitrary, it follows that
$$T_\lm^\l(\o v^*)p=\S_{j=1}^\l\f{\P_{i=1}^h(x_i)_\m/(x_i)_{\m-\Le_j}}{\P_{i=0}^h(y_i)_\m/(y_i)_{\m-\Le_j}}\ (\dl_v p)_{\m-\Le_j}.$$
Now the assertion follows from
$$\f{(\ll)_\m}{(\ll)_{\m-\Le_j}}=\f{(\ll-\f a2(j-1))_{m_j}}{(\ll-\f a2(j-1))_{m_j-1}}=\ll-\f a2(j-1)+m_j-1.$$
\end{proof}

\section{\bf Limit measures}
The basic result concerning Toeplitz $C^*$-algebras on bounded symmetric domains states that every irreducible representation is realized on a unique boundary component $\lO^c,$ for any tripotent $c.$ This was carried out in full detail for the Hardy space in \cite{U2,U3} and its generalization to weighted Bergman spaces was described in \cite{U4}. Here a crucial step, which was indicated in \cite{U4} and proved in detail in the recent paper \cite{U5}, is the limit behavior of the underlying measures under certain 
{\bf peaking functions}. In the present paper, this crucial result will be generalized to the boundary orbits $\lO_{k,\l},$ and their intersection with Kepler varieties. This is not completely straightforward, since the assignment $f^{(c)}(\lz):=f(c+\lz)$ is not compatible with the Peter-Weyl decomposition of $\PL(V).$ 

Let $c\in S_i$ with $i\le\l.$ Since $V_2^c=P_2^c V$ has rank $i\le\l$ and $(z|c)^n=(P_2^c z|c)^n,$ where $P_2^c$ denotes the Peirce 2-projection, it follows that
$$(z|c)^n\in\PL(V_2^c)\ic\PL^i(V)\ic\PL^\l(V).$$
Restricting (injectively) to $\lO_\l,$ the holomorphic function
\be{28}H_c(z):=\exp(z|c)=\S_{n=0}^\oo\f{(z|c)^n}{n!}\ee
on $\lO_\l$ can be regarded as an element of the Hilbert completion $\HL_{\lm,\l}$ of $\PL^\l(V)$ under $\lm.$ This applies in particular to $i=1.$  

Let $0\le i\le\l\le r$ and $c\in S_i.$ Then $c+\o\lO^c\ic\o\lO.$ For functions $f\in\CL(\o\lO_\l)$ we define 
$f^{(c)}\in\CL(\o\lO_{\l-i}^c)$ by 
\be{42}f^{(c)}(\lz):=f(c+\lz)\quad(\lz\in\o\lO_{\l-i}^c).\ee

\begin{lemma}\label{f} Let $\lm$ be an $\l$-hypergeometric measure on $\o\lO_\l.$ Let $0\le i\le\l$ and $c\in S_i.$ Then
$$\lim_{n\to\oo}\I_{\o\lO_\l}\lm(dz)\ \f{|H_c^n(z)|^2}{\|H_c^n\|_\lm^2}\ f(z)=0$$
for all $f\in\CL(\o\lO_\l)$ satisfying $f^{(c)}=0.$
\end{lemma}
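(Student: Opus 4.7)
The plan is a peaking-function argument: the probability measures
$$\lm_n(dz):=\f{|H_c^n(z)|^2}{\|H_c^n\|_\lm^2}\,\lm(dz)$$
should concentrate, as $n\ti\oo,$ on the subset of $\o\lO_\l$ where the peaking function $|H_c|^2=\exp(2\,\x{Re}(z|c))$ attains its maximum, and the hypothesis $f^{(c)}=0$ says precisely that $f$ vanishes on that peak set.

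First I would identify the peak set. Using $c\in V_2^c$ and Peirce orthogonality one has $(z|c)=(P_2^c z|c);$ combined with the duality between the spectral norm on $V$ and the trace norm (noting that for $c\in S_i$ all singular values are $1,$ so $\|c\|_1=(c|c)$), this gives $\x{Re}(z|c)\le(c|c)$ for every $z\in\o\lO,$ with equality iff $P_2^c z=c,$ i.e.\ iff $z\in c+\o\lO^c.$ Intersecting with $V_\l$ and using $c\in V_i\ic V_\l$ identifies the peak set inside $\o\lO_\l$ as exactly $c+\o\lO_{\l-i}^c,$ which by hypothesis is the vanishing locus of $f.$

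Next, the routine peaking step. Given $\lx>0,$ continuity of $f$ and compactness of $\o\lO_\l$ yield an open neighborhood $U$ of $c+\o\lO_{\l-i}^c$ on which $|f|<\lx.$ On the compact complement $K:=\o\lO_\l\sm U,$ the continuous function $|H_c|^2$ attains a strict maximum $M'<M:=\exp(2(c|c)).$ Splitting the integral,
$$\Bigl|\I_{\o\lO_\l}\lm(dz)\,\f{|H_c^n(z)|^2}{\|H_c^n\|_\lm^2}\,f(z)\Bigr|\le\lx+\|f\|_\oo\,\lm(K)\,\f{(M')^n}{\|H_c^n\|_\lm^2},$$
so it suffices to show $(M')^n/\|H_c^n\|_\lm^2\ti 0.$

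The main obstacle is the matching lower bound on $\|H_c^n\|_\lm^2.$ Since $c\in S_i$ with $i\le\l,$ the linear form $z\mapsto(z|c)$ is a polynomial on the Peirce $2$-space $V_2^c$ of rank $i,$ so each power $(z|c)^k$ lies in $\PL^i(V)\ic\PL^\l(V)$ and its Peter-Weyl decomposition involves only partitions $\m\in\Nl_+^i$ with $|\m|=k.$ Combining the $\l$-hypergeometric identity \er{6} with orthogonality of distinct homogeneous components and the Fischer-Fock identity $\|(z|c)^k\|_V^2=k!\,(c|c)^k$ (which follows from $H_c$ being the reproducing kernel of the Fischer-Fock space at $c$) yields
$$\|H_c^n\|_\lm^2=\S_{k=0}^\oo\f{n^{2k}}{(k!)^2}\S_{\m\in\Nl_+^i,\,|\m|=k}\f{\P_{j=1}^h(x_j)_\m}{\P_{j=0}^h(y_j)_\m}\,\|((z|c)^k)_\m\|_V^2,$$
a generalized hypergeometric series in the variable $n^2(c|c).$ Its large-$n$ asymptotics, carried out in detail in \cite{U5} for the Bergman and Hardy cases and extending essentially verbatim by retaining the contribution of a single Peter-Weyl component whose Pochhammer ratio grows only polynomially in $k,$ furnish a lower bound of the form $\|H_c^n\|_\lm^2\ge C\,M^n/n^N$ for some $C,N>0.$ Since $M'/M<1,$ exponential decay beats any polynomial, the error term vanishes in the limit, and letting $\lx\ti 0$ finishes the proof.
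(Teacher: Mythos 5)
Your overall peaking strategy, and the identification of the peak set $c+\o\lO_{\l-i}^c$ as the locus where $|H_c|$ attains its maximum on $\o\lO_\l$, coincide with the paper's proof. The divergence is in the one step that carries all the weight: the lower bound on $\|H_c^n\|_\lm^2$. The paper obtains this with no asymptotics whatsoever: having chosen $U$, it picks a second, smaller neighborhood $V\ic U$ of the peak set such that $q:=\sup_{\o\lO_\l\sm U}|H_c|\,/\inf_V|H_c|<1$ (possible by continuity of $H_c$, compactness, and the strict inequality $|H_c|<H_c(c)$ off the peak set), and then simply bounds $\|H_c^n\|_\lm^2\ge\I_V\lm(dz)\,|H_c(z)|^{2n}\ge(\inf_V|H_c|)^{2n}\,\lm(V),$ so the contribution of $\o\lO_\l\sm U$ is controlled by $q^{2n}\,\lm(\o\lO_\l\sm U)/\lm(V)\ti 0.$ You should adopt this two-neighborhood trick; it makes the entire hypergeometric computation unnecessary for this lemma.

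As written, your route through the explicit series has a genuine gap. The assertion that some Peter--Weyl component has a Pochhammer ratio growing ``only polynomially in $k$'' is false: a measure of type $\binom{y_0,\;,y_h}{x_1,\;,x_h}$ has one more denominator parameter than numerator parameters, so for \emph{every} $\m$ with $|\m|=k$ the ratio $\P_{i}(x_i)_\m/\P_{i}(y_i)_\m$ decays super-polynomially (at least like $1/(y_0)_\m$). The growth $M^n=e^{2n(c|c)}$ emerges only after combining this decay with the factors $n^{2k}/(k!)^2$ and the Fock norms, and the resulting series is a one-variable ${}_pF_q$ accessible to Wright's asymptotics only in the case $i=1$ -- which is the case actually treated in \cite{U5} and in the paper's limit-measure theorem. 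For $i>1$ the sum over $\m\in\Nl_+^i$ with $|\m|=k$ is dominated by near-rectangular partitions $\m\approx(k/i,\;,k/i)$; retaining instead a single component such as $\m=(k)$ yields only growth of order $e^{2n\F i}$, which is strictly smaller than $M^n=e^{2ni}$ and hence useless against $(M')^n$ with $M'$ close to $M$. So the crucial estimate $\|H_c^n\|_\lm^2\ge C\,M^n/n^N$ is true but not established by your argument; either supply the multivariate asymptotics (via the components $N_c(z)^m$), or replace the step by the paper's elementary lower bound.
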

\begin{proof} By assumption, for every $\Le>0$ there is an open neighborhood $U\ic\o\lO_\l$ of $c+\lO_{\l-i}^c$ satisfying 
$\sup |f(U)|\le\Le.$ By \cite[Lemma 6.2]{Lo2} we have $|(z|c)|<(c|c)$ for all $z\in\o\lO\sm\o\lO^c.$ Peirce orthogonality implies 
$(z|c)=(c|c)$ for all $z\in c+\o\lO^c.$ Therefore $|H_c|<H_c(c)$ on $\o\lO_\l\sm U,$ and a compactness argument shows that there exists an open neighborhood $V\ic U\ic\o\lO_\l$ of $c+\o\lO_{\l-i}^c$ such that
$$q:=\f{\sup_{\o\lO_\l\sm U}|H_c|}{\inf_V|H_c|}<1.$$
Therefore
$$\I_{\o\lO_\l}\lm(dz)\ \f{|H_c^n(z)|^2}{\|H_c^n\|_\lm^2}\ f(z)=\I_U\lm(dz)\ \f{|H_c^n(z)|^2}{\|H_c^n\|_\lm^2}\ f(z)
+\I_{\o\lO_\l\sm U}\lm(dz)\ \f{|H_c^n(z)|^2}{\|H_c^n\|_\lm^2}\ f(z)$$
$$\le\sup_U|f|+\sup_{\o\lO_\l}|f|\.\f{\I_{\o\lO_\l\sm U}\lm(dz)\ |H_c^n(z)|^2}{\I_V\lm(dz)\ |H_c^n(z)|^2}
\le\Le+\sup_{\o\lO_\l}|f|\.q^{2n}\ \f{\x{Vol}_\lm(\o\lO_\l\sm U)}{\x{Vol}_\lm(V)}.$$  
Since $q^{2n}\to 0$ it follows that
$$\limsup_{n\to\oo}\I_{\o\lO_\l}\lm(dz)\ \f{|H_c^n(z)|^2}{\|H_c^n\|_\lm^2}\ f(z)\le\Le.$$
\end{proof}

Now consider the special case $i=1.$ For $c=e_1\in S_1,$ let $\la:=(\la_1,\ldots,\la_{\l-1})\in\Nl_+^{\l-1}$ be a partition of length 
$\l-1.$ Define
\be{28}\la^+:=(\la_1,\la)\in\Nl_+^\l\ee
and consider the conical function
$$N_{\la^+}=N_2^{\la_1-\la_2}\ N_3^{\la_2-\la_3}\: N_\l^{\la_{\l-1}},$$
where $N_1,\;,N_r$ are the Jordan theoretic minors \cite{U1}. Then the conical function $N_\la^c$ relative to $V^c$ for the partition $\la$ satisfies
$$N_{\la^+}^{(c)}=N_\la^c.$$

The asymptotic expansion of generalized hypergeometric series 
\be{23}\z{p}{F}{q}(z)=\S_{n=0}^\oo\f{\P_{r=1}^p\lG(n+\lb_r)}{\P_{r=1}^q\lG(n+\lm_r)}\ \f{z^n}{n!}\ee
in one variable $z$ has been determined in \cite{W}. Put $\lk:=1+q-p$ and
$$\lt:=\f{q-p}2+\lb_1+\;+\lb_p-\lm_1-\;-\lm_q.$$
As a special case $M=1$ of \cite[Theorem 1]{W}, using \cite[Lemma 1]{W}, one obtains
$$\lim_{x\to+\oo}X^{-\lt}\ e^{-X}\ \z{p}{F}{q}(x)=A_0=(2\lp)^{(p-q)/2}\ \lk^{\f12-\lt},$$
where $X:=\lk\ x^{1/\lk}.$ If $q=p+1,$ this simplifies to $\lk=2,\ X=2\F x$ and $A_0=(2\lp)^{-1/2}\ 2^{\f12-\lt}=\lp^{-1/2}\ 2^{-\lt}.$ Therefore
\be{24}\lim_{x\to\oo}x^{-\lt/2}\ e^{-2\F x}\ \z{p}{F}{q}(x)=\f1{\F\lp}.\ee

\begin{theorem}\label{t} Let $\lm$ be a $K$-invariant $\l$-hypergeometric probability measure of type $\binom{y_0,\;,y_h}{x_1;\;,x_h}$ on $\o\lO_\l.$ Then for each $c\in S_1$ there exists a unique $K^c$-invariant $(\l-1)$-hypergeometric probability measure $\lm^{(c)}$ of type $\binom{y_0-\f a2,\;,y_h-\f a2}{x_1-\f a2,\;,x_h-\f a2}$ on $\o\lO_{\l-1}^c$ such that for all continuous functions $f$ we have
\be{27}\lim_{n\to\oo}\I_{\o\lO_\l}\lm(dz)\ \f{|H_c^n(z)|^2}{\|H_c^n\|_\lm^2}\ f(z)=\I_{\o\lO_{\l-1}^c}\lm^{(c)}(d\lz)\ \ f^{(c)}(\lz).\ee
\end{theorem}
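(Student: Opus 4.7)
The plan is to identify $\lm^{(c)}$ through its moments on conical test functions, using Lemma~\ref{f} to localize the support, Stone-Weierstrass plus $K^c$-invariance to recover the measure from its moments, and the Wright asymptotic \er{24} to evaluate the relevant hypergeometric limits. By Lemma~\ref{f} applied with $i=1$, any $f\in\CL(\o\lO_\l)$ with $f^{(c)}=0$ contributes zero to the limit, and since both $H_c(z)=\exp(z|c)$ and $\lm$ are $K^c$-invariant (because $kc=c$ for $k\in K^c$), the candidate limit functional is $K^c$-invariant. Stone-Weierstrass together with this $K^c$-invariance reduces the identification of $\lm^{(c)}$ to the numbers $\|N_\lk^c\|_{\lm^{(c)}}^2$ for the conical polynomials $N_\lk^c$, $\lk\in\Nl_+^{\l-1}$.

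For each such $\lk$ I would test against $f:=|N_{\lk^+}|^2$ with $\lk^+:=(\lk_1,\lk)\in\Nl_+^\l$; by the restriction identity $N_{\lk^+}^{(c)}=N_\lk^c$ stated just before the theorem, $f^{(c)}=|N_\lk^c|^2$. Expanding
\[|H_c^n(z)|^2=\S_{j,k\ge 0}\f{n^{j+k}}{j!\,k!}\,N_1(z)^j\,\o{N_1(z)^k}\]
and invoking the conical multiplicativity $N_1^j\,N_{\lk^+}=N_{\lk^++j\Le_1}$, each product $N_1^j N_{\lk^+}$ lies in the single Peter-Weyl component $\PL_{\lk^++j\Le_1}(V)$; $K$-invariance of $\lm$ then forces $j=k$ by Peter-Weyl orthogonality. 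Applying the $\l$-hypergeometric property of $\lm$ together with the Pochhammer shift $(\la)_{\lk^++j\Le_1}=(\la)_{\lk^+}(\la+\lk_1)_j$ (a special case of \er{18}) gives
\[\I|H_c^n|^2\,|N_{\lk^+}|^2\,d\lm=\f{\P_{i=1}^h(x_i)_{\lk^+}}{\P_{i=0}^h(y_i)_{\lk^+}}\S_j\f{n^{2j}}{(j!)^2}\f{\P_{i=1}^h(x_i+\lk_1)_j}{\P_{i=0}^h(y_i+\lk_1)_j}\|N_1^j N_{\lk^+}\|_V^2,\]
which, after expressing the conical Fock norm in Pochhammer form, is a generalized hypergeometric series of type ${}_hF_{h+1}$ in $n^2$; the case $\lk=0$ gives the same shape for the normalizing denominator $\|H_c^n\|_\lm^2$.

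Since $q=p+1=h+1$, the Wright asymptotic \er{24} applies to both series, each behaving as $\pi^{-1/2}n^\lt e^{2n}$. On taking the ratio the exponentials cancel, and the shift $x_i\mapsto x_i+\lk_1$, $y_i\mapsto y_i+\lk_1$ moves the Wright exponent $\lt$ by $h\lk_1-(h+1)\lk_1=-\lk_1$; this drop is compensated exactly by the large-$j$ growth of $\|N_1^j N_{\lk^+}\|_V^2/\|N_1^j\|_V^2$. Using the Pochhammer splitting $(\la)_{\lk^+}=(\la)_{\lk_1}(\la-\f a2)_\lk^c$ (where $(\cdot)_\lk^c$ denotes the rank-$(r-1)$ Pochhammer on $V^c$), the $(\la)_{\lk_1}$ factors cancel against the surviving Fock constants, leaving
\[\lim_{n\to\oo}\I\f{|H_c^n|^2}{\|H_c^n\|_\lm^2}\,|N_{\lk^+}|^2\,d\lm=\f{\P_{i=1}^h(x_i-\f a2)_\lk^c}{\P_{i=0}^h(y_i-\f a2)_\lk^c}\,\|N_\lk^c\|_{V^c}^2,\]
which is precisely the defining formula for a $(\l-1)$-hypergeometric inner product of type $\binom{y_0-a/2,\,\ldots,\,y_h-a/2}{x_1-a/2,\,\ldots,\,x_h-a/2}$. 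The positive functional so obtained extends by Stone-Weierstrass, via Lemma~\ref{f} for general continuous $f$, to the required probability measure $\lm^{(c)}$.

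The main obstacle is the delicate bookkeeping in the final step: verifying that the $n^\lt$ Wright correction, the $j\to\oo$ behaviour of the conical Fock norm $\|N_1^j N_{\lk^+}\|_V^2$, and the Pochhammer splitting combine so that the $(\la)_{\lk_1}$-factors and the power of $n$ cancel exactly, with neither a residual $n$-power nor an extraneous constant surviving. Once this identity is pinned down, the passage from moments on the conical test functions to the probability measure $\lm^{(c)}$ is routine.
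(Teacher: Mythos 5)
Your proposal follows essentially the same route as the paper's proof: localization of any weak cluster point via Lemma \ref{f}, reduction by $K^c$-invariance to the conical test functions $N_\m=N_1^s\,N_{\la^+}$, evaluation of $\|N_\m\|_\lm^2$ through the $\l$-hypergeometric property together with the splitting $(\ll)_\m=(\ll)_{m_1}\,(\ll-\f a2)_\la$, and Wright's asymptotic \er{24} applied to the resulting generalized hypergeometric series in $n^2$. The ``delicate bookkeeping'' you flag as the main obstacle is exactly what the paper carries out explicitly: including the Fock-norm Gamma factors it computes the Wright exponent as $\lt=\f12+\S_{i=1}^h x_i-\S_{i=0}^h y_i$, independent of $\la$, so that $F_\la(n^2)/F_0(n^2)\to 1$ with no residual power of $n$ or extraneous constant, confirming your expectation.
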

\begin{proof} By $K$-invariance, we may assume that $c=e_1.$ By Lemma \ref{f} each weak cluster point $\lm'$ of the sequence of probability measures on the left of \er{27} is supported on the closure $\o\lO_{\l-1}^c$ and is invariant under $K^c.$ Thus it suffices to compute the $\lm'$-inner product for $\la$-homogeneous polynomials on $V^c,$ where $\la\in\Nl_+^{\l-1}$ is arbitrary. By irreducibility, it is enough to consider the conical functions $N_\la^c$ relative to $V^c.$ Defining $\la^+\in\Nl_+^\l$ as in 
\er{28}, we consider for any $s\in\Nl$ the conical function 
$$(z|e_1)^s\ N_{\la^+}=N_1^s\ N_{\la^+}=N_\m,$$ 
where $\m=(m_1,\la_1,\;,\la_{\l-1},0^{r-\l})$ and $m_1=s+\la_1.$ In the proof of \cite[Theorem 5.5]{U5} it was shown that the respective Fock inner products are related by
$$\f{\|N_\m\|_V^2}{\|N_\la^c\|_{V^c}^2}=\f{(1+\f a2(\l-1))_\m}{(1+\f a2(\l-2))_\la}
\P_{1\le j<\l}\f{(1+\f a2(j-1))_{m_1-\la_j}}{(1+\f a2 j)_{m_1-\la_j}}
=(1+\f a2(\l-1))_{m_1}\P_{1\le j<\l}\f{(1+\f a2(j-1))_{m_1-\la_j}}{(1+\f a2 j)_{m_1-\la_j}}.$$
For any $\ll\in\Cl$ we have
$$\f{(\ll)_\m}{(\ll-\f a2)_\la}=(\ll)_{m_1}\P_{1<j\le\l}\f{(\ll-\f a2(j-1))_{m_j}}{(\ll-\f a2-\f a2(j-2))_{\la_{j-1}}}=(\ll)_{m_1}.$$
It follows that
$$\f{\|N_\m\|_\lm^2}{\|N_\la^c\|_{V^c}^2}=\f{\|N_\m\|_V^2}{\|N_\la^c\|_{V^c}^2}\ \f{\P_{i=1}^h(x_i)_\m}{\P_{i=0}^h(y_i)_\m}
=\f{\P_{i=1}^h(x_i)_\m}{\P_{i=0}^h(y_i)_\m}\ (1+\f a2(\l-1))_{m_1}\P_{1\le j<\l}\f{(1+\f a2(j-1))_{m_1-\la_j}}{(1+\f a2 j)_{m_1-\la_j}}$$
$$=\f{\P_{i=1}^h(x_i-\f a2)_\la}{\P_{i=0}^h(y_i-\f a2)_\la}
\ \f{(1+\f a2(\l-1))_{m_1}\P_{i=1}^h(x_i)_{m_1}}{\P_{i=0}^h(y_i)_{m_1}}\P_{1\le j<\l}\f{(1+\f a2(j-1))_{m_1-\la_j}}{(1+\f a2 j)_{m_1-\la_j}}=A\ \f{\P_{i=1}^h(x_i-\f a2)_\la}{\P_{i=0}^h(y_i-\f a2)_\la}\ B(m_1),$$
where $A$ is independent of $\la$ and $s,$ and 
$$B(t):=\f{\lG(t+1+\f a2(\l-1))\P_{i=1}^h\lG(t+x_i)}{\P_{i=0}^h\lG(t+y_i)}
\P_{1\le j<\l}\f{\lG(t+1+\f a2(j-1)-\la_j)}{\lG(t+1+\f a2 j-\la_j)}.$$
For $(e^{(z|e_1)})^n=e^{n(z|e_1)}$ we obtain by orthogonality
$$\f1{\|N_\la^c\|_{V^c}^2}\I_{\o\lO^\l}\lm(dz)\ |e^{(z|e_1)}|^{2n}\ |N_{\la^+}(z)|^2
=\S_{s\ge 0}\f{n^{2s}}{(s!)^2}\f1{\|N_\la^c\|_{V^c}^2}\I_{\o\lO^\l}\lm(dz)\ |(z|e_1)|^{2s}\ |N_{\la^+}(z)|^2$$
$$=\S_{s\ge 0}\f{n^{2s}}{(s!)^2}\f{\|N_\m\|_\lm^2}{\|N_\la^c\|_{V^c}^2}
=A\ \f{\P_{i=1}^h(x_i-\f a2)_\la}{\P_{i=0}^h(y_i-\f a2)_\la}\S_{s\ge 0}\f{n^{2s}}{(s!)^2}B(\la_1+s)
=A\ \f{\P_{i=1}^h(x_i-\f a2)_\la}{\P_{i=0}^h(y_i-\f a2)_\la}\ F_\la(n^2),$$
where $F_\la(X)$ is a hypergeometric series in the sense of \er{23}, with parameters 
$$\la_1+x_1,\;,\ \la_1+x_h,\ \la_1+1+\f a2(\l-1),\ \la_1-\la_2+1+\f a2,\;,\ \la_1-\la_{\l-1}+1+\f a2(\l-2)$$ 
in the numerator and 
$$\la_1+y_0,\;\ \la_1+y_h,\ 1+\f a2,\ \la_1-\la_2+1+\f a2\ 2,\;,\ \la_1-\la_{\l-1}+1+\f a2(\l-1)$$ 
in the denominator. One power of $s!$ cancels against the numerator term $\lG(1+\f a2(j-2)+\la_1-\la_{j-1}+s)$ for $j=2.$ The crucial parameter $\lt$ in \er{24} is computed as
$$\lt=\f12+\S_{i=1}^h(\la_1+x_i)+\(\la_1+1+\f a2(\l-1)\)+\(\la_1-\la_2+1+\f a2\)+\;+\(\la_1-\la_{\l-1}+1+\f a2(\l-2)\)$$
$$-\S_{i=0}^h(\la_1+y_i)-\(1+\f a2\)-\(\la_1-\la_2+1+\f a2\ 2\)-\;-\(\la_1-\la_{\l-1}+1+\f a2(\l-1)\)$$
$$=\f12+\S_{i=1}^h x_i-\S_{i=0}^h y_i+\(1+\f a2(\l-1)\)-\(1+\f a2\)-\f a2(\l-2)=\f12+\S_{i=1}^h x_i-\S_{i=0}^h y_i.$$
Putting $x=n^2,$ \er{24} implies
$$\lim_{n\to\oo}n^{-\lt}\ e^{-2n}\ F_\la(n^2)=\f1{\F\lp}.$$
Since $\lt$ is independent of $\la,$ the same limit holds for $\la=0.$ Thus we obtain
$$\lim_{n\to\oo}\f{F_\la(n^2)}{F_0(n^2)}=1.$$
Passing to the probability measure cancels the constant $A$ and we obtain
$$\f1{\|N_\la^c\|_{V^c}^2}\I_{\lO^\l}\lm(dz)\ \f{|e^{(z|e_1)}|^{2n}}{\|(e^{(z|e_1)})^n\|_\lm^2}\ |N_{\la^+}(z)|^2
\to\f{\P_{i=1}^h(x_i-\f a2)_\la}{\P_{i=0}^h(y_i-\f a2)_\la}.$$
Hence any cluster point $\lm'$ is an $(\l-1)$-hypergeometric probability measure of the same type 
$\binom{y_0-\f a2,\;,y_h-\f a2}{x_1-\f a2,\;,x_h-\f a2}$ on $\o\lO_{\l-1}^c.$ In view of Lemma \ref{f} this determines the limit measure on each irreducible $K^c$-type, which, as explained above, implies the assertion.
\end{proof}

\begin{remark} For the ``concrete" $\l$-hypergeometric measures $M_{\ln,\l}$ ($k=0$) and $M_{k,\l}$ ($k>0$) constructed in Section 
\ref{s} we obtain as limit measures
$$M_{\ln,\l}^{(c)}=M_{\ln-\f a2,\l-1}^c$$ 
$$M_{k,\l}^{(c)}=M_{k-1,\l-1}^c,$$
where the superscript $c$ refers to the Peirce 0-space $V^c.$ In the second case this follows from 
$$\ln_k-\f a2=\ln_{k-1}^c.$$
If $k=0$ then $\ln>p-1$ is any parameter in the discrete series, in which case $\ln-\f a2>p^{(c)}-1$ belongs to the discrete series of 
$\lO^c.$ As special cases ($\l=r$) we have
$$M_\ln^{(c)}=M_{\ln-\f a2}^c$$ 
$$M_{k,r}^{(c)}=M_{k-1,r-1}^c$$
for the ``full" measures. Here for $k\ge 2$ and $\x{rank}\ \lO^c=r-1$ the value
$$\ln_{k-1}^c=1+b+\f a2(2(r-1)-(k-1)-1)=1+b+\f a2(2r-k-1)-\f a2=\ln_k^r-\f a2$$
is again a boundary parameter for $\lO^c,$ whereas for $k=1$ the parameter
$$\ln_0=\ln_1-\f a2=p-1-\f a2=1+b+a(r-1)-\f a2>1+b+a(r-2)=p_{r-1}-1$$
belongs to the discrete series of $\lO^c.$ Understanding this ``disappearing boundary orbit" in the limit was one of the original motivations for the current paper.
\end{remark}

\section{\bf Boundary representations}
The (unital) Toeplitz $C^*$-algebra $\TL$ associated with a bounded domain $\lO\ic\Cl^d$ can be regarded as a deformation of 
$\CL(\o\lO)$ in the sense of ``non-commutative geometry". Thus the spectrum of $\TL,$ consisting of all irreducible $*$-representations, is a 'non-commutative' (non-Hausdorff) compactification of $\lO,$ involving the geometry of the boundary. In this section we carry out this program for Toeplitz operators over boundary orbits and algebraic varieties, using the boundary stratification described in Proposition \ref{d}. For each $0\le j<k$ the partial closures satisfy
$$\lO_{k,r}=\U_{c\in S_j}c+\lO_{k-j,r-j}^c.$$
as a non-disjoint union.

For two sequences $(f_n),(g_n)$ in $\HL_{\lm,\l}$ we put
$$f_n\sim g_n$$
if $\lim_{n\to\oo}\|f_n-g_n\|_{\lm,\l}=0.$ For any $c\in S_i$ put 
$$h_c^n(z):=H_c^n(z)/\|H_c^n\|_{\lm,\l}.$$ 
In the following we embed $\PL(V^c)\ic\PL(V)$ via the Peirce projection $V\to V^c.$

\begin{lemma}\label{h} Let $p\in\PL^\l(V)$ and $q\in\PL^{\l-1}(V^c)\ic\PL^\l(V).$ Then
$$T_{\lm,\l}(p)(h_c^n\ q)\sim h_c^n\ T_{\lm^c,\l-1}(p^{(c)})q$$
for all $c\in S_1$
\end{lemma}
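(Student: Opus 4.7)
My plan is to represent both $T_{\lm,\l}(p)(h_c^n q)$ and $h_c^n T_{\lm^c,\l-1}(p^{(c)}) q$ as explicit polynomials in $L^2(\lm)$ and then reduce the claim to a single application of Theorem~\ref{t} to a continuous symbol that vanishes on the support of the limit measure.

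First I would observe that the Toeplitz projection $P_{\lm,\l}$ acts as the identity on any polynomial viewed as an element of $L^2(\lm)$: the Peter-Weyl components in $\PL_\m(V)$ with $\x{length}(\m)>\l$ belong to the defining ideal of the Kepler variety $V_\l\ci\o\lO_\l=\x{supp}(\lm)$, hence represent the zero class in $L^2(\lm)$, while the remaining components of length $\le\l$ lie in $\PL^\l(V)\ic\HL_{\lm,\l}$. In particular $T_{\lm,\l}(p)(h_c^n q)=P_{\lm,\l}(p h_c^n q)=p h_c^n q$ in $L^2(\lm)$, and the same argument on $V^c$ gives $T_{\lm^c,\l-1}(p^{(c)}) q=\t r$ in $L^2(\lm^c)$, where $\t r\in\PL^{\l-1}(V^c)$ is the polynomial obtained from $p^{(c)} q$ by discarding the isotypic components of length $\ge\l$ on $V^c$. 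Pulling $\t r$ back to $V$ via the Peirce projection, both sides of the claim are polynomials on $V$ whose difference in $L^2(\lm)$ is $(pq-\t r) h_c^n$, so
\[
\|T_{\lm,\l}(p)(h_c^n q) - h_c^n T_{\lm^c,\l-1}(p^{(c)}) q\|_\lm^2 = \I_{\o\lO_\l}|pq-\t r|^2\,|h_c^n|^2\,\lm(dz).
\]

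Next I would apply Theorem~\ref{t} with the continuous symbol $f:=|pq-\t r|^2$ on $\o\lO_\l$. The peaking-function limit identifies
\[
\lim_{n\to\oo}\I_{\o\lO_\l}|pq-\t r|^2\,|h_c^n|^2\,\lm(dz)=\I_{\o\lO_{\l-1}^c}|p^{(c)}(\lz) q(\lz) - \t r(\lz)|^2\,\lm^{(c)}(d\lz),
\]
using $p(c+\lz)=p^{(c)}(\lz)$ and the Peirce identities $q(c+\lz)=q(\lz)$, $\t r(c+\lz)=\t r(\lz)$ (valid because $q$ and $\t r$, viewed on $V$ via Peirce pullback, depend only on $P_0^c z$).

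Finally, the integrand vanishes identically on $\x{supp}(\lm^{(c)})\ic V^c_{\l-1}$: by construction $p^{(c)} q-\t r$ on $V^c$ consists only of Peter-Weyl components of length $\ge\l$, hence vanishes on $V^c_{\l-1}$. The limit integral is therefore $0$, which is exactly the desired $\os$-relation. The chief subtlety---rather than a serious obstacle---is the identification of Toeplitz-projected polynomials with the polynomials themselves in $L^2(\lm)$ via the vanishing of high-length isotypic components on the Kepler variety; once this is observed the proof is essentially a one-step invocation of Theorem~\ref{t}.
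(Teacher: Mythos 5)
Your argument is correct and takes essentially the same route as the paper's proof: both identify the Toeplitz operators with holomorphic polynomial symbol as multiplication operators (using that isotypic components of length $>\ell$ vanish on the Kepler variety, resp.\ length $>\ell-1$ on $V^c_{\ell-1}$), and then kill the error term $h_c^n(pq-\tilde r)$ by the peaking-function limit, since $pq-\tilde r$ vanishes on $c+\overline{\Omega}{}_{\ell-1}^c$. The only cosmetic differences are that you invoke the full limit-measure Theorem~\ref{t} where the paper's Lemma~\ref{f} (the case $f^{(c)}=0$) already suffices, and that you treat the two error terms $(p-p^{(c)})q$ and $p^{(c)}q-\tilde r$ in one stroke rather than via two successive $\sim$'s.
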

\begin{proof} Since $p-p^{(c)}$ vanishes on $c+\lO_{\l-1}^c,$ Lemma \ref{f} implies 
$$\|\f{H_c^n}{\|H_c^n\|_{\lm,\l}}\ p-\f{H_c^n}{\|H_c^n\|_{\lm,\l}}\ p^{(c)}\|_{\lm,\l}^2
=\I_{\o\lO_\l}\lm(dz)\ \f{|H_c^n(z)|^2}{\|H_c^n\|_{\lm,\l}^2}\ |p(z)-p^{(c)}(z)|^2\to 0.$$
It follows that
$$T_{\lm,\l}(p)(h_c^n\ q)=p(h_c^n\ q)\sim h_s^n(p^{(c)}\ q)\sim h_c^n\ T_{\lm^c,\l-1}^c(p^{(c)})q.$$
\end{proof}

The adjoint operators $T_{\lm,\l}(\o p)$ are more difficult to handle. For a partition $\la=(\la_1,\;,\la_{\l-1})\in\Nl_+^{\l-1}$ consider the orthogonal projection
$$\lp_\la^\l:\PL^\l(V)\to\S_{m_1\ge\la_1}\PL_{m_1,\la}(V)\ic\PL^\l(V),$$
with $(m_1,\la)\in\Nl_+^\l\ic\Nl_+^r.$ Then $\S_{\la\in\Nl_+^{\l-1}}\lp_\la^\l=\x{Id}$ on $\PL^\l(V).$

\begin{lemma}\label{n} Let $p\in\PL(V_2^c)\ic\PL^\l(V)$ and $v\in V^c,$ where $c=e_1.$ Then we have for every $\la\in\Nl_+^{\l-1}$
\be{39}p\ N_{\la^+}\in\x{Ran}(\lp_\la^\l)\ee
\be{40}T_{\lm,\l}(\o v^*)(p\ N_{\la^+})=\S_{j=1}^{\l-1}\f{\P_{i=1}^h(x_i-\f a2-\f a2(j-1)+\la_j-1)}
{\P_{i=0}^h(y_i-\f a2-\f a2(j-1)+\la_j-1)}\ \lp_{\la-\Le_j}^\l(p\.\dl_v N_{\la^+}).\ee
\end{lemma}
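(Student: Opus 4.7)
The plan is to reduce both assertions to Proposition~\ref{e}, applied to the conical polynomials $N_{(s+\la_1,\la)}$, after expanding $p$ as a polynomial in $N_1$.

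\emph{Part~\eqref{39}.} Since $c=e_1$ is a minimal tripotent, $V_2^c=\Cl e_1$ is one-dimensional, so $\PL(V_2^c)=\Cl[N_1]$ with $N_1(z)=(z|e_1)$. Writing $p=\S_s a_s\,N_1^s$ and invoking multiplicativity of conical polynomials, each summand $a_s\,N_1^s\,N_{\la^+}=a_s\,N_{(s+\la_1,\la)}$ lies in $\PL_{(s+\la_1,\la)}(V)$, a summand of $\x{Ran}(\lp_\la^\l)$; this gives \eqref{39}.

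\emph{Part~\eqref{40}.} By linearity in $p$ it suffices to apply Proposition~\ref{e} to each $q_s:=N_{\m_s}\in\PL_{\m_s}(V)$ with $\m_s:=(s+\la_1,\la)$, then re-sum over $s$ with weights $a_s$. Since $v\in V_0^{e_1}$ is Peirce-orthogonal to $V_2^{e_1}$, both $\dl_v N_1$ and $\dl_v p$ vanish, hence $\dl_v q_s=N_1^s\,\dl_v N_{\la^+}$. Proposition~\ref{e} gives
\[
T_{\lm,\l}(\o v^*)q_s=\S_{j=1}^\l c_j(\m_s)\,(N_1^s\,\dl_v N_{\la^+})_{\m_s-\Le_j},
\]
with $c_j(\m_s):=\f{\P_{i=1}^h(x_i-\f{a}{2}(j-1)+(\m_s)_j-1)}{\P_{i=0}^h(y_i-\f{a}{2}(j-1)+(\m_s)_j-1)}$. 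For $j\ge 2$, $(\m_s)_j=\la_{j-1}$, so $c_j(\m_s)$ is independent of $s$ and, after the reindexing $j_\la:=j-1$, matches the coefficient in \eqref{40}; moreover $\m_s-\Le_j=(s+\la_1,\la-\Le_{j-1})$, and a degree count forces only the $m_1=s+\la_1$ slice of $N_1^s\,\dl_v N_{\la^+}$ in $\S_{m_1}\PL_{(m_1,\la-\Le_{j-1})}(V)$ to contribute. Summing over $s$ therefore reconstitutes $\lp_{\la-\Le_{j_\la}}^\l(p\cdot\dl_v N_{\la^+})$, yielding the $j_\la$-th term of \eqref{40}.

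The crux will be the vanishing of the $j=1$ contribution $(N_1^s\,\dl_v N_{\la^+})_{(s+\la_1-1,\la)}=0$ for each $s\ge 1$, since this index lies in $\x{Ran}(\lp_\la^\l)$, which is absent from the right-hand side of \eqref{40}. The plan here is to pair against an arbitrary $r\in\PL_{(s+\la_1-1,\la)}(V)$ via the Fischer--Fock adjoint identity $N_1^*=\dl_{e_1}$:
\[
(N_1^s\,\dl_v N_{\la^+}|r)_V=(\dl_v N_{\la^+}|\dl_{e_1}^s r)_V.
\]
By \eqref{22}, $\dl_v N_{\la^+}$ is supported on $\S_{i=2}^\l\PL_{\la^+-\Le_i}(V)$ (the $i=1$ term fails the partition inequality), so only the components of $\dl_{e_1}^s r$ in those same isotypic spaces can contribute. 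A Leibniz expansion of $\dl_v N_{\la^+}$, combined with the interaction between the $V_2^{e_1}$-derivative $\dl_{e_1}$ and the $V_0^{e_1}$-derivative $\dl_v$, shows that the Jordan minors produced on the two sides live on Peirce-complementary sub-triples and are therefore Fock-orthogonal; a $K^c$-equivariance argument extends the vanishing from the conical generator $N_{(s+\la_1-1,\la)}$ to the full irreducible $\PL_{(s+\la_1-1,\la)}(V)$. I expect this vanishing step, which requires careful bookkeeping of the Peirce decompositions and of the Jordan-theoretic structure of the minors $N_k$, to be the main technical obstacle.
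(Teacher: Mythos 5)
Your reduction is the same one the paper uses: since $e_1$ is minimal, $V_2^{e_1}=\Cl e_1$ and $p=\S_s a_sN_1^s$, so everything comes down to applying Proposition \ref{e} to the conical polynomials $N_{(s+\la_1,\la)}$ and reassembling the $j\ge2$ terms. Your verification of \er{39}, of the coefficient match after the shift $j\mapsto j-1$, and of the fact that only the $m_1=s+\la_1$ slice survives in $\lp_{\la-\Le_{j-1}}^\l$ (by the total-degree count) are all correct, and indeed more self-contained than the paper, which simply cites \cite[Lemma 3.5]{U2} for \er{39}.

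The gap is exactly the step you flag: the vanishing of the $j=1$ term, i.e. $(\dl_v N_{(s+\la_1,\la)})_{(s+\la_1-1,\la)}=0$ for $v\in V^{e_1}$ and $s\ge1$. The paper does not prove this either; it invokes \cite[Lemma 2.9]{U2}, which states precisely that $\dl_vN_\m\in\S_{j=2}^\l\PL_{\m-\Le_j}(V)$ when $v\in V^c$. Your proposed substitute --- pairing against $r\in\PL_{(s+\la_1-1,\la)}(V)$ via $N_1^*=\dl_{e_1}$ and arguing Fock-orthogonality because the two sides ``live on Peirce-complementary sub-triples'' --- would not go through as described. After applying $\dl_{e_1}^s$, the polynomial $\dl_{e_1}^sr$ has, in general, nonzero components in the very isotypic spaces $\PL_{(\la_1,\la-\Le_{i-1})}(V)$, $i\ge2$, that carry the components of $\dl_vN_{\la^+}$; two nonzero vectors inside a single irreducible $K$-type cannot be shown orthogonal by isotype or Peirce bookkeeping, and the individual pairings $(\dl_vN_{\la^+}\,|\,\dl_{e_1}^sr)_V$ need not vanish --- what vanishes is the single isotypic component $(\dl_vN_\m)_{\m-\Le_1}$, a different assertion. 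The efficient fix is to cite \cite[Lemma 2.9]{U2}; alternatively, a torus-weight argument works: $v\in V^{e_1}$ lies in joint Peirce spaces $V_{ij}$ with $i,j\ne1$, so $\dl_v$ preserves the homogeneity degree in the $e_1$-direction, which is $m_1=s+\la_1$ for $N_\m$, whereas every weight of $\PL_{\m-\Le_1}(V)$ has first coordinate at most $m_1-1$. With that single step supplied, the rest of your argument is sound.
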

\begin{proof} The first assertion is proved in \cite[Lemma 3.5]{U2}. By \cite[Lemma 2.9]{U2} we have
$$\dl_v N_\m\in\S_{j=2}^\l\PL_{\m-\Le_j}(V).$$
Since $v\in V^c$ implies $\dl_v p=0,$ we have $\dl_v N_\m=p\.\dl_v N_{\la^+},$ and Proposition \ref{e} yields
$$T_{\lm,\l}(\o v^*)(p\ N_{\la^+})=T_{\lm,\l}(\o v^*)\ N_\m
=\S_{j=2}^\l\f{\P_{i=1}^h(x_i-\f a2(j-1)+m_j-1)}{\P_{i=0}^h(y_i-\f a2(j-1)+m_j-1)}\ (\dl_v N_\m)_{\m-\Le_j}$$
$$=\S_{j=2}^\l\f{\P_{i=1}^h(x_i-\f a2(j-1)+\la_j^+-1)}{\P_{i=0}^h(y_i-\f a2(j-1)+\la_j^+-1)}\ (p\.\dl_v N_{\la^+})_{\m-\Le_j}.$$
Shifting $j\mapsto j-1$ and using $\PL_{\m-\Le_j}(V)\ic\text{Ran}(\lp_{\la-\Le_{j-1}}^\l)$ for all $1<j\le\l,$ the assertion follows. 
\end{proof}

\begin{lemma}\label{i} Let $q\in\PL^{\l-1}(V^c)$ and $\la\in\Nl_+^{\l-1}.$ Then
$$\lp_\la^\l(h_c^n\ q)\sim h_c^n\ q_\la.$$
\end{lemma}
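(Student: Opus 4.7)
The plan is a three-step reduction ending in an application of equation \er{39} of Lemma \ref{n}, with the peaking estimate of Lemma \ref{f} absorbing the errors at each stage.

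First I would decompose $q=\S_{\n\in\Nl_+^{\l-1}}q_\n$ as a finite sum of $K^c$-isotypic components with $q_\n\in\PL_\n(V^c)$, so that $q_\la$ is precisely the summand at $\n=\la$. By linearity of $\lp_\la^\l$ it suffices to prove $\lp_\la^\l(h_c^n\,q_\n)\sim\de_{\la,\n}\,h_c^n\,q_\n$ for each $\n\in\Nl_+^{\l-1}$. Since $\PL_\n(V^c)$ is irreducible under $K^c\ic K$ and contains the nonzero conical function $N_\n^c$, the $K^c$-linear span of $N_\n^c$ fills out all of $\PL_\n(V^c)$; combined with the $K^c$-invariance of $H_c$ and the $K^c$-equivariance of $\lp_\la^\l$ (inherited from $K$-equivariance), this further reduces the problem to the single case $q_\n=N_\n^c$.

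For that conical case I would argue geometrically. The identity $N_{\n^+}^{(c)}=N_\n^c$ noted just after \er{28}, together with $N_\n^c(c+\lz)=N_\n^c(\lz)$ for $\lz\in V^c$ (since $N_\n^c$ is pulled back from $V^c$ via the Peirce-0 projection, with $P_0^c c=0$), shows that the continuous function $|N_\n^c-N_{\n^+}|^2$ on $\o\lO_\l$ has vanishing $(c)$-restriction. Lemma \ref{f} then yields $\|h_c^n(N_\n^c-N_{\n^+})\|_{\lm,\l}\to 0$, and by contractivity of $\lp_\la^\l$ I may replace $h_c^n N_\n^c$ with $h_c^n N_{\n^+}$ throughout the asymptotic.

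To finish, equation \er{39} applied to the polynomial symbol $p=(z|c)^s\in\PL(V_2^c)$ places every term $(z|c)^s N_{\n^+}$ in $\x{Ran}(\lp_\n^\l)$. Since the series
$$h_c^n\,N_{\n^+}=\f1{\|H_c^n\|_{\lm,\l}}\S_{s\ge 0}\f{n^s}{s!}\,(z|c)^s\,N_{\n^+}$$
converges in $\HL_{\lm,\l}$, its limit lies in the closure $\o{\x{Ran}(\lp_\n^\l)}$. Because the ranges of $\lp_\la^\l$ and $\lp_\n^\l$ are orthogonal for $\la\ne\n$ and coincide for $\la=\n$, one obtains $\lp_\la^\l(h_c^n N_{\n^+})=\de_{\la,\n}\,h_c^n N_{\n^+}$. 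A final appeal to Lemma \ref{f} swaps $N_{\n^+}$ back to $N_\n^c$, and lifting back through $K^c$-equivariance delivers the asymptotic for arbitrary $q_\n\in\PL_\n(V^c)$.

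The step I expect to be most delicate is verifying that the two polynomials $N_\n^c$ (pulled back from $V^c$ by the Peirce-0 projection) and the ambient conical function $N_{\n^+}$ truly agree on the entire affine slice $c+\o\lO^c_{\l-1}$, so that Lemma \ref{f} can be applied without loss of information; this ultimately rests on the identity $N_{\n^+}^{(c)}=N_\n^c$ together with the Peirce-0 definition of $N_\n^c$ and the orthogonality $(V_0^c\,|\,V_2^c)=0$.
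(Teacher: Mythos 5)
Your proposal is correct and follows essentially the same route as the paper's proof: reduce by $K^c$-equivariance to the conical polynomial $N_\lb^c$, replace it by $N_{\lb^+}$ via Lemma \ref{f} and continuity of $\lp_\la^\l$, and then invoke \er{39} (with $h_c^n$ in the closure of $\PL(V_2^c)$) together with orthogonality of the ranges. The only difference is that you spell out the term-by-term application of \er{39} to the series expansion of $h_c^n N_{\lb^+}$, which the paper compresses into one sentence.
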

\begin{proof} We may assume that $q\in\PL_\lb(V^c)$ for some partition $\lb\in\Nl_+^{\l-1}.$ Every $\lg\in K^c$ has an extension 
$g\in K$ satisfying $gc=c$ (see the proof of \cite[Lemma 6.2]{U5}). Since $h_c^n$ is fixed under the action of $g,$ we may assume that $q=N_\lb'$ is the conical polynomial in $V^c$ of type $\lb.$ Then $N_{\lb^+}-q$ vanishes on $c+\lO_{\l-1}^c,$ and Lemma \ref{f} implies 
\be{41}h_c^n\ q\sim h_c^n\ N_{\lb^+}.\ee
Since the projection $\lp_\la^\l$ has a continuous extension to $\HL_{\lm,\l}$ it follows that
$$\lp_\la^\l(h_c^n\ q)\sim\lp_\la^\l(h_c^n\ N_{\lb^+}).$$
Since $h_c^n$ belongs to the closure of $\PL(V_2^c)$ in $\HL_{\lm,\l},$ \er{39} implies $h_c^n\ N_{\lb^+}\in\x{Ran}(\lp_\lb^\l).$ Therefore orthogonality implies
$$\lp_\la^\l(h_c^n\ N_{\lb^+})=\ld_{\la,\lb}\ h_c^n\ N_{\lb^+}\sim\ld_{\la,\lb}\ h_c^n\ q=h_c^n\ q_\la.$$
\end{proof}

\begin{proposition}\label{k} Let $p\in\PL^\l(V)$ and $q\in\PL^{\l-1}(V^c)\ic\PL^\l(V).$ Then the adjoint Toeplitz operators satisfy
$$T_{\lm,\l}(\o p)(h_c^n\ q)\sim h_c^n\ T_{\lm^{(c)},\l-1}^c(\o p^{(c)})q$$
for all $c\in S_1.$
\end{proposition}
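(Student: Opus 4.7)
By $K$-equivariance I may take $c=e_1$. The plan is a sequence of three reductions: (i) from general $p\in\PL^\l(V)$ to linear symbols $p=v^*$, using antimultiplicativity of $\bar p\mapsto T_{\lm,\l}(\o p)$ on conjugate holomorphic polynomials; (ii) from $v\in V$ to $v\in V^c=V_0^c$, using Peirce decomposition of $v$; and (iii) from general $q\in\PL^{\l-1}(V^c)$ to the conical polynomial $q=N_\la'$, using $K^c$-irreducibility. After these, the claim reduces to a chain of $\os$-equivalences assembled from Lemmas~\ref{f}, \ref{n}, \ref{i} and the parameter shift of Theorem~\ref{t}.

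For (i), multiplication by a holomorphic polynomial preserves $\HL_{\lm,\l}$, so $T_{\lm,\l}(p_1p_2)=T_{\lm,\l}(p_1)T_{\lm,\l}(p_2)$ and dually $T_{\lm,\l}(\o{p_1p_2})=T_{\lm,\l}(\o{p_2})T_{\lm,\l}(\o{p_1})$ for $p_1,p_2\in\PL(V)$; the same antimultiplicativity holds for $T_{\lm^{(c)},\l-1}^c$. Combined with $(p_1p_2)^{(c)}=p_1^{(c)}p_2^{(c)}$, writing $p\in\PL^\l(V)$ as a linear combination of products of linear forms and iterating the linear case gives the general case; boundedness $\|T_{\lm,\l}(\o v^*)\|\le\|v^*\|_\infty$ preserves $\os$ at every step. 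For (ii), decompose $v=v_2+v_1+v_0$. Peirce orthogonality yields $(v_2^*)^{(c)}\equiv(c|v_2)$, $(v_1^*)^{(c)}\equiv 0$, and $(v_0^*)^{(c)}=v_0^*|_{V^c}$. For $i\in\{1,2\}$ the continuous function $g_i:=\o{v_i^*}-\o{(c|v_i)}$ vanishes on $c+\o\lO^c$, so Lemma~\ref{f} gives $\int|h_c^n|^2|g_i|^2|q|^2\,d\lm\to 0$ and hence
$$\|T_{\lm,\l}(\o{v_i^*})(h_c^nq)-\o{(c|v_i)}h_c^nq\|_{\lm,\l}\le\|g_ih_c^nq\|_{L^2(\lm)}\to 0,$$
which is exactly the RHS for those pieces. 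Thus we may assume $v=v_0\in V^c$.

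For (iii), write $q=\S_\la q_\la$ with $q_\la\in\PL_\la(V^c)$. Setting $L_v(q):=T_{\lm,\l}(\o v^*)(h_c^nq)$ and $R_v(q):=h_c^nT_{\lm^{(c)},\l-1}^c(\o v^*)q$, one checks (using $K^c$-invariance of $h_c^n$, $\lm$, and of the shifted type of $\lm^{(c)}$) that $L_v(g\.q)=U_gL_{g^{-1}v}(q)$ and $R_v(g\.q)=U_gR_{g^{-1}v}(q)$ for $g\in K^c$, whence $\|L_v(g\.q)-R_v(g\.q)\|_{\lm,\l}=\|L_{g^{-1}v}(q)-R_{g^{-1}v}(q)\|_{\lm,\l}$; since $K^c\.N_\la'$ spans the irreducible $K^c$-module $\PL_\la(V^c)$, it suffices to prove the claim for $q_\la=N_\la'$ and all $v\in V^c$. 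Now the chain runs as follows. First, $h_c^nN_\la'\os h_c^nN_{\la^+}$ by Lemma~\ref{f} since $N_{\la^+}^{(c)}=N_\la'$. Second, approximating $h_c^n$ in $\HL_{\lm,\l}$ by truncations of its Taylor series (which lie in $\PL(V_2^c)$) and invoking Lemma~\ref{n}, the limit gives
$$T_{\lm,\l}(\o v^*)(h_c^nN_{\la^+})=\S_{j=1}^{\l-1}\f{\P_{i=1}^h(x_i-\f a2-\f a2(j-1)+\la_j-1)}{\P_{i=0}^h(y_i-\f a2-\f a2(j-1)+\la_j-1)}\,\lp^\l_{\la-\Le_j}(h_c^n\dl_vN_{\la^+}).$$
Third, since $v\in V^c$ forces $(\dl_vN_{\la^+})^{(c)}=\dl_vN_\la'$, Lemma~\ref{f} and continuity of $\lp^\l_{\la-\Le_j}$ replace $\dl_vN_{\la^+}$ by $\dl_vN_\la'$ inside the projection. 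Fourth, Lemma~\ref{i} applied to $\dl_vN_\la'\in\PL^{\l-1}(V^c)$ yields $\lp^\l_{\la-\Le_j}(h_c^n\dl_vN_\la')\os h_c^n(\dl_vN_\la')_{\la-\Le_j}$. Finally, by Theorem~\ref{t}, $\lm^{(c)}$ is of shifted hypergeometric type $\binom{y_0-a/2,\ldots,y_h-a/2}{x_1-a/2,\ldots,x_h-a/2}$, so Proposition~\ref{e} applied on $V^c$ produces precisely the coefficients above, and the sum collapses to $h_c^nT_{\lm^{(c)},\l-1}^c(\o v^*)N_\la'$. The main technical obstacle is the bookkeeping of this last chain: one must identify the ambient projection $\lp^\l_{\la-\Le_j}$ coming from Lemma~\ref{n} with the $V^c$-isotypic projection $(\cdot)_{\la-\Le_j}$ demanded by Proposition~\ref{e} on $\lm^{(c)}$, and verify that the $a/2$-shift in the coefficients of Lemma~\ref{n} (arising from $c\in S_1$) coincides exactly with the shift of Theorem~\ref{t}. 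Both matches are engineered by design, so once noticed the argument is a direct assembly.
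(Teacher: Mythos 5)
Your proof is correct and follows essentially the same route as the paper: reduce to linear symbols by antimultiplicativity of $\o p\mapsto T_{\lm,\l}(\o p)$, split $v$ by Peirce decomposition, reduce to conical $q=N_\la'$, and assemble the chain from Lemmas \ref{f}, \ref{n}, \ref{i} together with Proposition \ref{e} applied to the shifted type of $\lm^{(c)}$ from Theorem \ref{t}. Your explicit justifications for applying Lemma \ref{n} to $h_c^n$ (via Taylor truncations in $\PL(V_2^c)$) and for the $K^c$-equivariant reduction to conical $q$ are details the paper leaves implicit, but they do not change the argument.
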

\begin{proof} Assume first that $p(z)=(z|v)$ is linear. If $v\in V_2^c\op V_1^c,$ then $p^{(c)}$ is constant and Lemma \ref{h} implies
$$T_{\lm,\l}(\o p)(h_c^n\ q)=P_{\lm,\l}(\o p\ h_c^n\ q)\sim P_{\lm,\l}(\o p^{(c)}\ h_c^n\ q)
=\o p^{(c)}\ h_c^n\ q=h_c^n\ T_{\lm^{(c)},\l-1}^c(\o p^{(c)})q$$
since the orthogonal projection $P_{\lm,\l}$ is continuous. If $v\in V^c,$ we may assume as in the proof of Lemma \ref{i} that 
$q=N_\la'$ is the conical polynomial in $\PL_\la(V^c)$ for some partition $\la\in\Nl_+^{\l-1}.$ Then $N_{\la^+}-q$ vanishes on 
$c+\lO_{\l-1}^c.$ Since $v$ is tangent to $V^c$ it follows that $(\dl_v N_{\la^+})^c=\dl_v N_\la^c.$ Hence 
$\dl_v(N_{\la^+}-q)$ vanishes on $c+\lO_{\l-1}^c$ as well. Applying \er{41}, Lemma \ref{n} and Lemma \ref{i}, we obtain 
$$T_{\lm,\l}(\o v^*)(h_c^n\ q)\sim T_{\lm,\l}(\o v^*)(h_c^n\ N_{\la^+})
=\S_{j=2}^\l\ \f{\P_{i=1}^h(x_i-\f a2-\f a2(j-1)+\la_j-1)}{\P_{i=0}^h(y_i-\f a2-\f a2(j-1)+\la_j-1)}
\ \lp_{\la-\Le_{j-1}}^\l(h_c^n\.\dl_v N_{\la^+})$$
$$\sim\S_{j=2}^\l\ \f{\P_{i=1}^h(x_i-\f a2-\f a2(j-1)+\la_j-1)}{\P_{i=0}^h(y_i-\f a2-\f a2(j-1)+\la_j-1)}
\ \lp_{\la-\Le_{j-1}}^\l(h_c^n\.\dl_v q)$$
$$\sim h_c^n\S_{j=2}^\l\ \f{\P_{i=1}^h(x_i-\f a2-\f a2(j-1)+\la_j-1)}{\P_{i=0}^h(y_i-\f a2-\f a2(j-1)+\la_j-1)}
\ (\dl_v q)_{\la-\Le_{j-1}}=h_c^n\ T_{\lm^{(c)},\l-1}^c(\o p^{(c)})(q),$$
since $r-j=(r-1)-(j-1)$ and $\l-j=(\l-1)-(j-1).$ The last identity follows from Proposition \ref{e} and the fact that $p^{(c)}=p$ if 
$v\in V^c.$ This proves the assertion for linear symbol functions. 

Now suppose that the assertion holds for polynomials $\lf,\lq$ up to a certain degree. Since $\lm^c$ is again a $(\l-1)$-hypergeometric measure for $V^c$ and $\lf^{(c)}$ has degree $\le\deg\lf$, we may apply this assumption to $q$ and 
$T_{\lm^c,\l-1}^c(\o\lf^{(c)})q\in\PL^{\l-1}(V^c)$ to obtain
$$T_{\lm,\l}(\o{\lf\lq})(h_c^n\ q)=T_{\lm,\l}(\o\lq)\ T_{\lm,\l}(\o\lf)(h_c^n\ q)\sim T_{\lm,\l}(\o\lq)(h_c^n\ T_{\lm^{(c)},\l-1}^c(\o\lf^{(c)})q)$$
$$\sim h_c^n\ T_{\lm^c,\l-1}^c(\o\lq^s)\ T_{\lm^{(c)},\l-1}^c(\o\lf^{(c)})q=h_c^n\ T_{\lm^{(c)},\l-1}^c(\o{\lf\lq}^c)q.$$
Thus the assertion holds for $\lf\lq.$ Since the assertion holds for linear forms, the proof is complete.
\end{proof}

The following is our main result.

\begin{theorem}\label{l} Let $0\le i\le\l$ and let $c\in S_i$ be arbitrary. Then the Toeplitz $C^*$-algebra $\TL_{k,\l}$ has an irreducible $*$-representation
$$\ls_{k,\l}^{(c)}:\TL_{k,\l}\to\TL_{k\sm i,\l-i}^c$$
which is uniquely determined by the property 
\be{45}\ls_{k,\l}^{(c)}T_{k,\l}(f)=T_{k\sm i,\l-i}^c(f^{(c)})\ee
for all $f\in\CL(\lO_{k,\l}),$ with $f^{(c)}\in\CL(\lO_{k\sm i,\l-i}^c)$ defined by \er{42}. Here we define
$$k\sm i:=\begin{cases}k-i&i<k\\0&k\le i\le\l\end{cases}.$$ 
In the first case the Toeplitz operator $T_{k-i,\l-i}^c$ acts on a boundary orbit of the ``little" Kepler ball $\lO_{\l-i}^c.$ In the second case the Toeplitz operator $T_{0,\l-i}^c=T_{\l-i}^c$ acts on $\lO_{\l-i}^c=\lO_{0,\l-i}^c$ with discrete series parameter $\ln_k-i\f a2.$ 
\end{theorem}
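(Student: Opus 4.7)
The strategy is to build $\ls_{k,\l}^{(c)}$ by asymptotic compression against the normalized peaking sequence $h_c^n(z):=H_c^n(z)/\|H_c^n\|_{k,\l}$. The case $i=1$ contains all of the analytic content; arbitrary $i$ will then follow by induction. Writing any $c\in S_i$ as an orthogonal sum $c=c_1+c'$ with $c_1\in S_1$ and $c'\in S_{i-1}^{c_1}$, one sets
$$\ls_{k,\l}^{(c)}=\ls_{k\sm1,\l-1}^{(c')}\oc\ls_{k,\l}^{(c_1)}.$$
That the composition lands in the advertised target $\TL_{k\sm i,\l-i}^c$, with parameter $\ln_k-i\f a2$ in the discrete-series case $i\ge k$, follows from iterating the Remark after Theorem \ref{t}: the arithmetic identity
$$\ln_k-i\f a2=\ln_{k-i}^c\quad(i<k),\qquad\ln_k-i\f a2>p^{(c)}-1\quad(i\ge k)$$
(both consequences of \er{15} and of $\f dr-\f{d^c}{r-i}=i\f a2$) guarantees that each intermediate measure is again of the concrete hypergeometric type appearing in the statement.

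Now fix $i=1$, $c\in S_1$, and abbreviate $\lm:=M_{k,\l}$. By Lemma \ref{h} and Proposition \ref{k},
$$T_{k,\l}(p)(h_c^n q)\os h_c^n\ T_{k\sm1,\l-1}^c(p^{(c)})q,\qquad T_{k,\l}(\o p)(h_c^n q)\os h_c^n\ T_{k\sm1,\l-1}^c(\o{p^{(c)}})q$$
for $p\in\PL^\l(V)$ and $q\in\PL^{\l-1}(V^c)$. Products in $z$ and $\o z$ preserve the $\os$-relation, since each $T_{k,\l}(\.)$ is bounded and $f\mapsto f^{(c)}$ is multiplicative; induction on total degree therefore gives
$$T_{k,\l}(f)(h_c^n q)\os h_c^n\ T_{k\sm1,\l-1}^c(f^{(c)})q$$
for every polynomial $f$ in $z,\o z$. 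Meanwhile, Theorem \ref{t} applied to $\o{q_1}q_2$ (approximated by polynomials in the $K^c$-invariant hull of $\PL^{\l-1}(V^c)$) yields
$$\lim_{n\ti\oo}(h_c^n q_1|h_c^n q_2)_{k,\l}=(q_1|q_2)_{k\sm1,\l-1}^c\quad(q_1,q_2\in\PL^{\l-1}(V^c)).$$

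Combining the two displays, for every polynomial symbol $f$ we obtain the representation-defining formula
$$\lim_{n\ti\oo}(T_{k,\l}(f)\ h_c^n q_1|h_c^n q_2)_{k,\l}=(T_{k\sm1,\l-1}^c(f^{(c)})q_1|q_2)_{k\sm1,\l-1}^c.$$
Since $\|T_{k\sm1,\l-1}^c(f^{(c)})\|\le\|f^{(c)}\|_\oo\le\|f\|_\oo$, Stone--Weierstrass density of polynomials in $\CL(\o\lO_{k,\l})$ extends the map $T_{k,\l}(f)\mapsto T_{k\sm1,\l-1}^c(f^{(c)})$ to a bounded linear map on the $*$-subalgebra generated by continuous-symbol Toeplitz operators; multiplicativity and the $*$-property follow from the compatibility of $f\mapsto f^{(c)}$ with products and conjugation, so the map is a $*$-homomorphism $\ls_{k,\l}^{(c)}:\TL_{k,\l}\ti\TL_{k\sm1,\l-1}^c$ satisfying \er{45}. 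Uniqueness is immediate from \er{45} since the image contains generators. Irreducibility follows from Remark \ref{f} applied to the target algebra, whose action on $\HL_{k\sm1,\l-1}^c$ is already irreducible.

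The main obstacle is bookkeeping in the induction step $i\mapsto i+1$: one must verify that the ``correct'' target at each stage is indeed the hypergeometric-measure Hilbert space on the next smaller Peirce 0-space, and that once $i$ reaches $k$ the limit measure is the discrete-series weighted Bergman measure on $\lO_{\l-i}^c$ rather than a further boundary measure. Both facts are contained in the Remark after Theorem \ref{t}, but one has to ensure that its hypotheses apply with the shifted parameters $(y_0-j\f a2,\;,y_h-j\f a2)$ at every stage, i.e.\ that these remain positive and that the resulting measure continues to be a probability measure; this is where the arithmetic identity $\ln_k-i\f a2>p^{(c)}-1$ for $i\ge k$ is essential.
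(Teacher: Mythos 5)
Your overall strategy coincides with the paper's: peak with $h_c^n$, treat the holomorphic generators via Lemma \ref{h} and their adjoints via Proposition \ref{k}, identify the limit of the inner products through the limit-measure theorem of Section 5, reduce to minimal tripotents $c\in S_1$ and compose for general $i$, and obtain irreducibility from the reproducing-kernel remark. The parameter bookkeeping $\ln_k-i\f a2=\ln_{k-i}^c$ for $i<k$ and $\ln_k-i\f a2>p^{(c)}-1$ for $i\ge k$ is also correct.

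There is, however, a genuine gap in the step that passes from the generators to the full $C^*$-algebra. You define the map only on single Toeplitz operators, $T_{k,\l}(f)\mapsto T_{k\sm1,\l-1}^c(f^{(c)})$, and argue that the bound $\|T_{k\sm1,\l-1}^c(f^{(c)})\|\le\|f\|_\oo$ plus Stone--Weierstrass extends it to the generated $*$-algebra, with multiplicativity ``following from the multiplicativity of $f\mapsto f^{(c)}$.'' This does not work: a generic element of $\TL_{k,\l}$ is a norm limit of sums of products $T(f_1)\cdots T(f_m)$ and is not itself of the form $T(f)$; since $T(f)T(g)\ne T(fg)$ for general continuous symbols, multiplicativity of $f\mapsto f^{(c)}$ says nothing about multiplicativity (or even well-definedness) of the putative representation on such products; and the estimate $\|T^c(f^{(c)})\|\le\|f\|_\oo$ does not bound $\|T^c(f^{(c)})\|$ by $\|T_{k,\l}(f)\|$, so no contractive extension is obtained. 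The correct mechanism, for which you already have every ingredient, is the one the paper uses: let $\AL$ be the set of operators $A$ in the polynomial $*$-algebra admitting an $A_c$ with $\lim_n\|A(h_c^n q)-h_c^n(A_cq)\|_{k,\l}=0$ for all $q\in\PL(V^c).$ This relation is automatically stable under composition and adjoints, so $\AL$ is a $*$-algebra and $A\mapsto A_c$ is a homomorphism; the limit-measure theorem applied to $|q|^2$ gives $\lim_n\|h_c^nq\|_{k,\l}=\|q\|_{k\sm1,\l-1}^c,$ whence $\|A_cq\|^c=\lim_n\|A(h_c^nq)\|_{k,\l}\le\|A\|\ \|q\|^c,$ i.e.\ $\|A_c\|\le\|A\|.$ Lemma \ref{h} and Proposition \ref{k} put the generators into $\AL,$ so $A\mapsto A_c$ extends contractively to the $C^*$-closure, and \er{45} for continuous $f$ follows by symbol approximation. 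Your weak-limit formula for $(T_{k,\l}(f)h_c^nq_1|h_c^nq_2)_{k,\l}$ is a consequence of this but is not by itself strong enough to construct the representation. (A minor related point: your claim that $T_{k,\l}(f)(h_c^nq)\os h_c^n\,T^c(f^{(c)})q$ for every polynomial $f$ in $z,\o z$ silently uses $T(\o p\,q)=T(\o p)T(q)$; this does hold here because $\PL^\l(V)^\perp$ vanishes on the support of $\lm,$ but it deserves a word.)
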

\begin{proof} For orthogonal tripotents $c\in S_i,d\in S_j^c,$ the defining property \er{45} yields a commuting diagram
$$\xymatrix{&\TL_{k\sm i,\l-i}^c\ar[dd]^{(\ls^c)_{k\sm i,\l-i}^{(d)}}\\\TL_{k,\l}\ar[ur]^{\ls_{k,\l}^{(c)}}\ar[dr]_{\ls_{k,\l}^{(c+d)}}&
\\&\TL_{k\sm{(i+j)},\l-(i+j)}^{c+d}}.$$
Since every tripotent is the orthogonal sum of minimal tripotents, it therefore suffices to consider minimal tripotents $c\in S_1.$ We may also assume $k\ge 1,$ since the Kepler ball case $k=0$ has been proven in \cite{U5}. 

Let $\AL$ denote the set of all operators $A$ in the $*$-subalgebra $\TL_0\ic\TL_{k,\l}$ generated by polynomial symbols, such that there exists an operator $A_c$ acting on $\PL(V^c)$ which satisfies
\be{37}\lim_{n\to\oo}\|A(h_c^n\ q)-h_c^n\ (A_c q)\|_{k,\l}=0\ee
for all $q\in\PL(V^c)\ic\PL^\l(V).$ Theorem \ref{t} implies that $A_c$ is uniquely determined by $A$ and 
\be{36}\|A_c\|\le\|A\|\ee
for the respective operator norms. By definition, $\AL$ is an algebra and \er{36} implies that $A\mapsto A_c$ has an extension
$\AL\to\BL(\HL_{k-1,\l-1}^c)$ (bounded operators) which is an algebra homomorphism. For every $p\in\PL(V),$ it follows from Lemma 
\ref{h} that $T_{k,\l}(p)\in\AL$ and $(T_{k,\l}p)_c=T_{k-1,\l-1}^c p^{(c)}.$ The corresponding statement $T_{k,\l}(\o p)\in\AL$ and 
$(T_{k,\l}\o p)_c=T_{k-1,\l-1}^c\o p^{(c)}$ for the adjoint operator follows from the deeper Proposition \ref{k}. Thus we have 
$\AL=\TL_0$ and, by \er{36}, $A\mapsto A_c$ has a unique $C^*$-extension, denoted by $\ls_{k,\l}^{(c)}$ to the closure $\TL_{k,\l}$ of 
$\TL_0.$ This extension satisfies \er{37} for all continuous symbols $f,$ since this property holds for polynomials and their conjugates. Thus we obtain a $C^*$-homomorphism
$$\ls_{k,\l}^{(c)}:\TL_{k,\l}\to\TL_{k-1,\l-1}^c.$$
As mentioned above, the case for arbitrary tripotents follows by iteration. The irreducibility of these representations follows from Remark \ref{f} applied to $M_{k\sm i,\l-i}^c.$ 
\end{proof}

\begin{remark} For different tripotents $c\in S_i$ and $d\in S_j$ the representations $\ls^{(c)}$ and $\ls^{(d)}$ are inequivalent.
This follows from Urysohn's Lemma since there exists $f\in\CL(\lO_{k,\l})$ which vanishes on 
$c+\lO_{k\sm i,\l-i}^c$ but not on $d+\lO_{k\sm j,\l-j}^d.$ Hence $T_{k,\l}(f)$ belongs to $\text{Ker}(\ls^{(c)})$ but not to 
$\text{Ker}(\ls^{(d)}).$ With more effort one can show that the full spectrum of $\TL_{k,\l}$ is given by the representations constructed above.
\end{remark}

\end{document}